\DeclareSymbolFont{AMSb}{U}{msb}{m}{n}
   \renewcommand\@biblabel[1]{#1.}
      \numberwithin{equation}{section}
\definecolor{antiquewhite}{rgb}{0.98, 0.92, 0.84}
\definecolor{buff}{rgb}{0.94, 0.86, 0.51}
\definecolor{palecopper}{rgb}{0.85, 0.54, 0.4}
\definecolor{fluorescentyellow}{rgb}{0.8, 1.0, 0.0}
\definecolor{britishracinggreen}{rgb}{0.0, 0.26, 0.15}
\definecolor{cobalt}{rgb}{0.0, 0.28, 0.67}
\DeclareSymbolFont{usualmathcal}{OMS}{cmsy}{m}{n}
\DeclareSymbolFontAlphabet{\mathcal}{usualmathcal}
\newcommand{\BA}{{\mathbb{A}}}
\newcommand{\BC}{{\mathbb{C}}}
\newcommand{\BE}{{\mathbb{E}}}
\newcommand{\BG}{{\mathbb{G}}}
\newcommand{\BL}{{\mathbb{L}}}
\newcommand{\BN}{{\mathbb{N}}}
\newcommand{\BQ}{{\mathbb{Q}}}
\newcommand{\BR}{{\mathbb{R}}}
\newcommand{\BT}{{\mathbb{T}}}
\newcommand{\BZ}{{\mathbb{Z}}}
\newcommand{\CM}{{\mathcal M}}
\newcommand{\CT}{{\mathcal T}}
\DeclareMathOperator{\points}{points}
\DeclareMathOperator{\Quot}{Quot}
\DeclareMathOperator{\coh}{coh}
\DeclareMathOperator{\Tr}{Tr}
\DeclareMathOperator{\vir}{\mathrm{vir}}
\DeclareMathOperator{\Exp}{Exp}
\DeclareMathOperator{\Var}{Var}
\DeclareMathOperator{\Rep}{R}
\DeclareMathOperator{\GL}{GL}
\DeclareMathOperator{\St}{St}
\newcommand{\dd}{\mathrm{d}}
\newcommand{\crit}{\operatorname{crit}}
\newcommand{\zetass}{\zeta\textrm{-ss}}
\newcommand{\zetast}{\zeta\textrm{-st}}
\DeclareFontFamily{OT1}{rsfs}{}
\DeclareFontShape{OT1}{rsfs}{n}{it}{<-> rsfs10}{}
\DeclareMathAlphabet{\curly}{OT1}{rsfs}{n}{it}
\newcommand\Hom{\operatorname{Hom}}
\newcommand\End{\operatorname{End}}
\newcommand{\DT}{\mathsf{DT}}
\newcommand{\OO}{\mathscr O}
\newenvironment{proofof}[1]{\par
  \pushQED{\qed}%
  \normalfont \topsep6\p@\@plus6\p@\relax
  \trivlist
  \item[\hskip3\labelsep
        \itshape
    Proof of #1\@addpunct{.}]\ignorespaces
}{%
  \popQED\endtrivlist\@endpefalse
}
\tikzset{commutative diagrams/arrow style=math font}
\tikzset{commutative diagrams/.cd,
mysymbol/.style={start anchor=center,end anchor=center,draw=none}}
\tikzset{
shift up/.style={
to path={([yshift=#1]\tikztostart.east) -- ([yshift=#1]\tikztotarget.west) \tikztonodes}
}
}
\theoremstyle{definition}
\newtheorem*{lemma*}{Lemma}
\newtheorem*{theorem*}{Theorem}
\newtheorem*{example*}{Example}
\newtheorem*{fact*}{Fact}
\newtheorem*{notation*}{Notation}
\newtheorem*{definition*}{Definition}
\newtheorem*{prop*}{Proposition}
\newtheorem*{remark*}{Remark}
\newtheorem*{corollary*}{Corollary}
\newtheorem*{conventions*}{Conventions}
\newtheorem{definition}{Definition}[section]
\newtheorem{example}[definition]{Example}
\newtheorem{convention}{Convention}
\newtheorem{remark}[definition]{Remark}
\newtheorem{claim}[definition]{Claim}
\newtheoremstyle{thm} % <name> % (ambienti con dimostrazione)
        {3mm}% <Space above>
        {3mm}% <Space below>
        {\slshape}% <Body font> % 
        {0mm}% <Indent amount>
        {\bfseries}% <Theorem head font>
        {.}% <Punctuation after theorem head>
        {1mm}% <Space after theorem head>
        {}% <Theorem head spec (can be left empty, meaning 'normal')> 
\theoremstyle{thm}
\newtheorem{lemma}[definition]{Lemma}
\newtheorem{prop}[definition]{Proposition}
\newtheorem{thm}{Theorem}
\newtheoremstyle{ex} % <name> % (ambienti con dimostrazione)
        {3mm}% <Space above>
        {3mm}% <Space below>
        {}% <Body font> % \slshape
        {0mm}% <Indent amount>
        {\scshape}% <Theorem head font>
        {.}% <Punctuation after theorem head>
        {1mm}% <Space after theorem head>
        {}% <Theorem head spec (can be left empty, meaning 'normal')> 
\theoremstyle{ex}
\newtheoremstyle{sol} % <name> % (ambienti con dimostrazione)
        {3mm}% <Space above>
        {3mm}% <Space below>
        {}% <Body font> % 
        {0mm}% <Indent amount>
        {\scshape}% <Theorem head font>
        {.}% <Punctuation after theorem head>
        {1mm}% <Space after theorem head>
        {}% <Theorem head spec (can be left empty, meaning 'normal')> 
\theoremstyle{sol}
\providecommand{\bthm}{\begin{thm}}
	\providecommand{\ethm}{\end{thm}}
\providecommand{\bfac}{\begin{fac}}
	\providecommand{\efac}{\end{fac}}
\providecommand{\bprop}{\begin{prop}}
	\providecommand{\eprop}{\end{prop}}
\providecommand{\blem}{\begin{lem}}
	\providecommand{\elem}{\end{lem}}
\providecommand{\bdefn}{\begin{defn}}
	\providecommand{\edefn}{\end{defn}}
\providecommand{\bcor}{\begin{cor}}
	\providecommand{\ecor}{\end{cor}}
\providecommand{\brem}{\begin{rem}}
	\providecommand{\erem}{\end{rem}}
\providecommand{\bproof}{\begin{proof}}
	\providecommand{\eproof}{\end{proof}}
\providecommand{\bnota}{\begin{nota}}
	\providecommand{\enota}{\end{nota}}
\providecommand{\beq}{\begin{equation}}
\providecommand{\eeq}{\end{equation}}
\providecommand{\bexa}{\begin{exa}}
	\providecommand{\eexa}{\end{exa}}
\providecommand{\bclaim}{\begin{claim}}
	\providecommand{\eclaim}{\end{claim}}
\newcounter{x}
\newcounter{y}
\newcounter{z}
\newcommand\xaxis{210}
\newcommand\yaxis{-30}
\newcommand\zaxis{90}
\newcommand\topside[3]{
  \fill[fill=yellow, draw=black,shift={(\xaxis:#1)},shift={(\yaxis:#2)},
  shift={(\zaxis:#3)}] (0,0) -- (30:1) -- (0,1) --(150:1)--(0,0);
}
\newcommand\leftside[3]{
  \fill[fill=green, draw=black,shift={(\xaxis:#1)},shift={(\yaxis:#2)},
  shift={(\zaxis:#3)}] (0,0) -- (0,-1) -- (210:1) --(150:1)--(0,0);
}
\newcommand\rightside[3]{
  \fill[fill=cyan, draw=black,shift={(\xaxis:#1)},shift={(\yaxis:#2)},
  shift={(\zaxis:#3)}] (0,0) -- (30:1) -- (-30:1) --(0,-1)--(0,0);
}
\newcommand\cube[3]{
  \topside{#1}{#2}{#3} \leftside{#1}{#2}{#3} \rightside{#1}{#2}{#3}
}
\newcommand\planepartition[1]{
 \setcounter{x}{-1}
  \foreach \a in {#1} {
    \addtocounter{x}{1}
    \setcounter{y}{-1}
    \foreach \b in \a {
      \addtocounter{y}{1}
      \setcounter{z}{-1}
      \foreach \c in {1,...,\b} {
        \addtocounter{z}{1}
        \cube{\value{x}}{\value{y}}{\value{z}}
      }
    }
  }
}
\title[Higher rank motivic DT invariants of $\BA^3$ via wall-crossing, and asymptotics]{Higher rank motivic  Donaldson--Thomas invariants of $\BA^3$ \\ via wall-crossing, and asymptotics}
\author{Alberto Cazzaniga}
\address{CNR-IOM 
(Trieste, DEMOCRITOS)  
c/o SISSA,
Via Bonomea 265, 34136 Trieste}
\email{cazzaniga@iom.cnr.it}
\author{Dimbinaina Ralaivaosaona}
\address{Department of Mathematical Sciences, Stellenbosch University\newline\indent Private Bag X1, Matieland 7602, South Africa}
\email{naina@sun.ac.za}
\author{Andrea T. Ricolfi}
\address{SISSA, Via Bonomea 265, 34136 Trieste, Italy\newline\indent Institute for Geometry and Physics, via Beirut 4, 34100 Trieste, Italy}
\email{aricolfi@sissa.it}
\begin{document}
\maketitle

\begin{abstract}
We compute, via motivic wall-crossing, the generating function of \emph{virtual motives} of the Quot scheme of points on $\BA^3$, generalising to higher rank a result of Behrend--Bryan--Szendr\H{o}i. We show that this motivic partition function converges to a Gaussian distribution, extending a result of Morrison.
\end{abstract}

{\hypersetup{linkcolor=black}
\tableofcontents}

\section{Introduction}
This paper has a two-fold goal: to compute, and to study the asymptotic behavior of the generating function of rank $r$ motivic Donaldson--Thomas invariants of $\BA^3$, namely the series
\begin{equation*}
\DT_r^{\points}(\BA^3,q) = \sum_{n\geq 0}\,\left[\Quot_{\BA^3}(\mathscr O^{\oplus r},n) \right]_{\vir}\cdot q^n \,\in\,\mathcal M_{\BC}\llbracket q \rrbracket.
\end{equation*}
Here $\mathcal M_{\BC}$ is a suitable motivic ring  and $[\,\cdot\,]_{\vir} \in \mathcal M_{\BC}$ is the \emph{virtual motive} (cf.~\S\,\ref{subsec:motivic_quantum_torus}), induced by the critical locus structure on the Quot scheme $\Quot_{\BA^3}(\mathscr O^{\oplus r},n)$ parametrising $0$-dimensional quotients of length $n$ of the free sheaf $\mathscr O^{\oplus r}$.

The following is our first main result.
\bthm\label{thm:main_motivic} 
There is an identity
\begin{equation}
   \label{formu}
\DT_r^{\points}(\BA^3,q) =
\prod_{m\geq 1}\prod_{k=0}^{rm-1}\left(1-\BL^{2+k-\frac{rm}{2}}q^{m}\right)^{-1}.
\end{equation}
Moreover, this series factors as $r$ copies of shifted rank $1$ contributions: there is an identity
\begin{equation}
   \label{formula_product}
\DT_r^{\points}(\BA^3,q)=\prod_{i=1}^r \DT_1^{\points}\left(\BA^3,q\BL^{\frac{-r-1}{2}+i}\right).
\end{equation}
\ethm
The result was first obtained in the case $r=1$ by Behrend, Bryan and Szendr\H{o}i \cite{BBS} via an explicit motivic vanishing cycle calculation. Formula \eqref{formula_product} follows by combining Formula \eqref{formu} and Lemma \ref{thm:quot_partition_function}. The approach of \S\,\ref{subsec:calculation_framed_3_loop_quiver}, where we prove Formula \eqref{formu}, is based on the techniques of \emph{motivic wall-crossing} for framed objects developed by Mozgovoy \cite{Mozgovoy_Framed_WC}, allowing us to express the invariants for $\Quot_{\BA^3}(\mathscr O^{\oplus r},n)$, which we view as `$r$-framed' Donaldson--Thomas invariants, in terms of the universal series of the invariants of \emph{unframed} representations of the $3$-loop quiver in a critical chamber. These ideas can be employed to compute framed motivic Donaldson--Thomas invariants of small crepant resolutions of affine toric Calabi–Yau 3-folds \cite{Cazza_Ric}, which also exhibit similar factorisation properties.

The fact that partition functions of rank $r$ invariants factor as $r$ copies of partition functions of rank $1$ invariants, shifted just as in Formula \eqref{formula_product}, has also been observed in the context of K-theoretic Donaldson--Thomas theory of $\BA^3$  \cite{FMR_K-DT}, as well as in string theory \cite{Magnificent_colors}. The exponential form of Formula \eqref{formu} has been exploited in \cite{Quot19} to define higher rank motivic Donaldson--Thomas invariants for an arbitrary smooth quasi-projective $3$-fold. 

\smallbreak 
Formula \eqref{formu} allows us to interpret the refined Donaldson--Thomas invariants of $\Quot_{\BA^3}(\mathscr O^{\oplus r},n)$ in terms of a weighted count of $r$-tuples of plane partitions $\overline{\pi}=(\pi_{1}, \ldots, \pi_{r})$ of total size $n$ (also known in the physics literature as $r$-\emph{colored plane partitions}). Setting $T=\BL^{1/2}$, the coefficient of $q^{n}$ in $\DT_r^{\points}(\BA^3,q)$ can be written as 
\begin{equation}\label{eq:coefM}
M_{n,r}(T)=\sum_{\overline{\pi}} T^{\,S_{n,r}(\overline{\pi})},    
\end{equation}
where $S_{n,r}$ is a certain explicit random variable on the space of $r$-tuples of plane partitions. In \S\,\ref{sec:Asymptotics}, we describe the asymptotic behavior of (a renormalisation of) the refined DT generating series, generalising A.~Morrison's result for $r=1$ \cite{Morrison_asymptotics}. We discuss the relationship with Morrison's work in \S\,\ref{sec:random_variables_on_colored_partitions}.

The following is our second main result. It will be proved in \S\,\ref{sub:proofB}.

\bthm\label{main1} As $n\to \infty$, the normalised random variable $n^{-2/3}S_{n,r}$ converges in distribution to $\mathcal{N}(\mu, \sigma^2)$ with
\[
\mu = \frac{r^{1/3}\pi^2}{2^{5/3}(\zeta(3))^{2/3}}\, \text{ and }\, \sigma^2=\frac{r^{5/3}}{(2\zeta(3))^{1/3}},
\]
where $\zeta(s)$ is Riemann's zeta function.
\ethm

\subsection*{Acknowledgments}
The very existence of this work owes a debt to Bal\'{a}zs Szendr\H{o}i, who brought the authors together. We thank him for his support and for generously sharing his ideas throughout the years. A.C.~thanks CNR-IOM for support and the excellent working conditions, and the African Institute for Mathematical Sciences (AIMS), South Africa, for support during the first part of this collaboration. D.R.~is supported by Division for Research Development (DRD) of
Stellenbosch University and the National Research Foundation (NRF) of South Africa. A.R.~is supported by Dipartimenti di Eccellenza and thanks SISSA for the excellent working conditions.

%%%%%%%%%%%%%%%%%%%%%%%%%%%%%%%%%%%%%%%%%%%%%%%%%%%%%%%%%%%%%%%%
%%%%%%%%%%%%%%%%%%%%%%%%%%%%%%%%%%%%%%%%%%%%%%%%%%%%%%%%%%%%%%%%
\section{Background material}
\label{sec:background_material}

%%%%%%%%%%%%%%%%%%%%%%%%%%%%%%%%%%%%%%%%%%%%%%%%%%%%%%%%%%%%%%%%
\subsection{Rings of motives and the motivic quantum torus}
\label{subsec:motivic_quantum_torus}

Let $K_0(\St_{\BC})$ be the Grothendieck ring of stacks. It can be defined as the localisation of the ordinary Grothendieck ring of varieties $K_0(\Var_{\BC})$ at the classes $[\GL_k]$ of general linear groups \cite{Bri-Hall}. The invariants we want to study will live in the extended ring 
\[
\CM_{\BC} = K_0(\St_{\BC})\bigl[\BL^{-\frac{1}{2}}\bigr],
\]
where $\BL = [\BA^1] \in K_0(\Var_{\BC}) \to K_0(\St_{\BC})$ is the Lefschetz motive. 

%%%%%%%%%%%%%%%%%%%%%%%%%%%%%%%%%%%%%%%%%%%%%%%%%%%%%%%%%%%%%%
\subsubsection{The virtual motive of a critical locus}\label{subsec:virtual_motive}

Let $U$ be a smooth $d$-dimensional $\BC$-scheme, $f\colon U \to \BA^1$ a regular function. The \emph{virtual motive} of the critical locus $\crit f = Z( \dd f) \subset U$, depending on the pair $(U,f)$, is defined in \cite{BBS} as the motivic class
\[
\bigl[\crit f\bigr]_{\vir} = -\BL^{-\frac{d}{2}}\cdot \left[\phi_f\right] \,\in \,\CM_{\BC}^{\hat\mu},
\]
where $[\phi_f] \in K_0^{\hat\mu}(\Var_{\BC})$ is the (absolute) motivic vanishing cycle class defined by Denef and Loeser \cite{DenefLoeser1}. The `$\hat\mu$' decoration means that we are considering $\hat\mu$-equivariant motives, where $\hat\mu$ is the group of all roots of unity. However, the motivic invariants studied here will live in the subring $\CM_{\BC}\subset \CM_{\BC}^{\hat\mu}$ of classes carrying the trivial action. 

\begin{example}\label{example:vir_motive_smooth_scheme}
Set $f = 0$. Then $\crit f = U$ and  $[U]_{\vir} = \BL^{-(\dim U)/2}\cdot [U]$. For instance, $[\GL_k]_{\vir} = \BL^{-k^2/2}\cdot [\GL_k]$.
\end{example}

\begin{remark}
We use lambda-ring conventions on $\mathcal M_{\BC}$ from \cite{BBS,DavisonR}. In particular we use the definition of $[\crit f]_{\vir}$ from \cite[\S\,2.8]{BBS}, which differs (slightly) from the one in \cite{RefConifold}. The difference amounts to the substitution $\BL^{1/2}\to -\BL^{1/2}$. The Euler number specialisation with our conventions is $\BL^{1/2}\to -1$.
\end{remark}

%%%%%%%%%%%%%%%%%%%%%%%%%%%%%%%%%
\subsection{Quivers and motivic quantum torus}
A quiver $Q$ is a finite directed graph, determined by its sets $Q_0$ and $Q_1$ of vertices and edges, respectively, along with the maps $h$, $t\colon Q_1 \to Q_0$ specifying where an edge starts or ends.
We use the notation
\[
\begin{tikzcd}
t(a) \,\,\bullet  \arrow{rr}{a} & & \bullet\,\, h(a)
\end{tikzcd}
\]
to denote the \emph{tail} and the \emph{head} of an edge $a \in Q_1$.

All quivers in this paper will be assumed connected.
The \emph{path algebra} $\BC Q$ of a quiver $Q$ is defined, as a $\BC$-vector space, by using as a $\BC$-basis the set of all paths in the quiver, including a trivial path $\epsilon_i$ for each $i \in Q_0$. The product is defined by concatenation of paths whenever the operation is possible, and is set to be $0$ otherwise. The identity element is $\sum_{i\in Q_0}\epsilon_i \in \BC Q$.

On a quiver $Q$ one can define the \emph{Euler--Ringel form} $\chi_Q(-,-)\colon \BZ^{Q_0}\times \BZ^{Q_0} \to \BZ$ by
\[
\chi_Q(\alpha,\beta) = \sum_{i \in Q_0}\alpha_i\beta_i - \sum_{a \in Q_1}\alpha_{t(a)}\beta_{h(a)},
\]
as well as the skew-symmetric form
\[
\braket{\alpha,\beta}_Q = \chi_Q(\alpha,\beta)-\chi_Q(\beta,\alpha).
\]

\begin{definition}[$r$-framing]\label{def:r-framing}
Let $Q$ be a quiver with a distinguished vertex $0\in Q_0$, and let $r$ be a positive integer. We define the quiver $\widetilde Q$ by adding one vertex, labelled $\infty$, to the original vertices in $Q_0$, and $r$ edges $\infty\to 0$. We refer to $\widetilde Q$ as the $r$-\emph{framed} quiver obtained out of $(Q,0)$.
\end{definition}

Let $Q$ be a quiver. Define its \emph{motivic quantum torus} (or \emph{twisted motivic algebra}) as
\[
\mathcal T_Q = \prod_{\alpha \in \BN^{Q_0}} \mathcal{M}_{\BC}\cdot y^\alpha
\]
with product
\begin{equation}\label{eqn:product_in_TQ}
    y^\alpha\cdot y^\beta = \BL^{\frac{1}{2}\braket{\alpha,\beta}_Q}y^{\alpha+\beta}.
\end{equation}
If $\widetilde{Q}$ is the $r$-framed quiver associated to $(Q,0)$, one has a decomposition 
\[
\mathcal T_{\widetilde{Q}} = \mathcal T_Q \oplus \prod_{d\geq 0} \CM_{\BC}\cdot y_\infty^d,
\]
where we have set $y_\infty = y^{(1,\mathbf 0)}$. A generator $y^\alpha \in \mathcal T_Q$ will be identified with its image $y^{(0,\alpha)} \in \mathcal T_{\widetilde{Q}}$.

%%%%%%%%%%%%%%%%%%%%%%%%%%%%%%%%%%%%%%%%%%%%%%%%%%%%%%%%%%%%%%%%%%
\subsection{Quiver representations and their stability}

Let $Q$ be a quiver.
A \emph{representation} $\rho$ of $Q$ is the datum of a finite dimensional $\BC$-vector space $\rho_i$ for every vertex $i\in Q_0$, and a linear map $\rho(a)\colon \rho_i\to \rho_j$ for every edge $a\colon i\to j$ in $Q_1$.
The \emph{dimension vector} of $\rho$ is $
\underline{\dim}\,\rho = (\dim_{\BC} \rho_i)_i\in \mathbb N^{Q_0}$, where $\BN=\BZ_{\geq 0}$.

\begin{convention}\label{order_of_dimensions}
Let $Q$ be a quiver, $\widetilde Q$ its $r$-framing. The dimension vector of a representation $\widetilde \rho$ of $\widetilde Q$ will be denoted by $(d,\alpha)$, where $d = \dim_{\BC}\widetilde{\rho}_\infty \in \BN$ and $\alpha \in \BN^{Q_0}$.
\end{convention}

The space of all representations of $Q$ with a fixed dimension vector $\alpha\in \mathbb N^{Q_0}$ is the affine space
\[
\Rep(Q,\alpha) = \prod_{a \in Q_1}\Hom_{\BC}(\BC^{\alpha_{t(a)}},\BC^{\alpha_{h(a)}}).
\]
The gauge group $\GL_\alpha = \prod_{i\in Q_0} \GL_{\alpha_i}$ acts on $\Rep(Q,\alpha)$ by $(g_i)_i \cdot (\rho(a))_{a\in Q_1} = (g_{h(a)}\circ\rho(a)\circ g_{t(a)}^{-1})_{a \in Q_1}$.

Following \cite{RefConifold}, we recall the notion of (semi)stability of a representation.

\begin{definition}\label{centralcharge}
A \emph{central charge} is a group homomorphism $\mathrm{Z}\colon \mathbb Z^{Q_0}\to \BC$ such that the image of $\mathbb N^{Q_0}\setminus 0$ lies inside $\mathbb H_+ = \set{re^{\sqrt{-1}\pi\varphi}|r>0,\,0<\varphi\leq 1}$. For every $\alpha\in \mathbb N^{Q_0}\setminus 0$, we denote by $\varphi(\alpha)$ the real number $\varphi$ such that $\mathrm{Z}(\alpha) = re^{\sqrt{-1}\pi\varphi}$. It is called the \emph{phase} of $\alpha$ with respect to $\mathrm{Z}$.
\end{definition}

Note that every vector $\zeta\in \BR^{Q_0}$ induces a central charge $\mathrm{Z}_{\zeta}$ if we set
$\mathrm{Z}_{\zeta}(\alpha) = -\zeta\cdot \alpha + \lvert\alpha\rvert\sqrt{-1}$,
where $\lvert\alpha\rvert = \sum_{i \in Q_0}\alpha_i$. We denote by $\varphi_\zeta$ the induced phase function, and we set $
\varphi_\zeta(\rho) = \varphi_\zeta(\underline{\dim}\,\rho)$ for every representation $\rho$ of $Q$.

\begin{definition}\label{stablereps}
Fix $\zeta\in \BR^{Q_0}$. A representation $\rho$ of $Q$ is called \emph{$\zeta$-semistable} if 
\[
\varphi_\zeta(\rho')\leq \varphi_\zeta(\rho)
\]
for every nonzero proper subrepresentation $0\neq \rho'\subsetneq \rho$. If `$\leq$' can be replaced by `$<$', we say that $\rho$ is \emph{$\zeta$-stable}. Vectors $\zeta\in \BR^{Q_0}$ are referred to as \emph{stability parameters}. 
\end{definition}

\begin{definition}
Let $\alpha \in \BN^{Q_0}$ be a dimension vector. A stability parameter $\zeta$ is called $\alpha$-\emph{generic} if for any $0<\beta<\alpha$ one has $\varphi_\zeta(\beta) \neq \varphi_\zeta(\alpha)$. This implies that every $\zeta$-semistable representation of $Q$, of dimension $\alpha$, is $\zeta$-stable.
\end{definition}

The sets of $\zeta$-stable and $\zeta$-semistable representations with given dimension vector $\alpha$ form a chain of open subsets 
\[
\Rep^{\zetast}(Q,\alpha)\subset \Rep^{\zetass}(Q,\alpha)\subset \Rep(Q,\alpha).
\]

%%%%%%%%%%%%%%%%%%%%%%%%%%%%%%%%%
\subsection{Quivers with potential}
\label{subsec:quiver_with_potential}
Let $Q$ be a quiver. Consider the quotient $\BC Q / [\BC Q,\BC Q]$ of the path algebra by the commutator ideal. A finite linear combination of cyclic paths $W \in \BC Q / [\BC Q,\BC Q]$ 
is called a \emph{superpotential}. 
Given a cyclic path $w$ and an arrow $a \in Q_1$, one defines the noncommutative derivative
\[
\frac{\partial w}{\partial a} = 
\sum_{\substack{w=cac' \\ c,c'\textrm{ paths in }Q}}c'c\,\in\,\BC Q.
\]
This rule extends to an operator $\partial/\partial a$ acting on every superpotential. The \emph{Jacobi algebra} $J=J_{Q,W}$ of $(Q,W)$ is the quotient of $\BC Q$ by the two-sided ideal generated by $\partial W/\partial a$ for all edges $a \in Q_1$.
For every $\alpha \in \BN^{Q_0}$, the superpotential $W = \sum_ca_cc$ determines a regular function
\[
f_\alpha \colon \Rep(Q,\alpha) \to \BA^1,\quad \rho \mapsto \sum_{c\textrm{ cycle in }Q}a_c \Tr (\rho(c)).
\]
The points in the critical locus $\crit f_\alpha \subset \Rep(Q,\alpha)$ correspond to $\alpha$-dimensional $J$-\emph{modules}. 

Fix an $\alpha$-generic stability parameter $\zeta \in \BR^{Q_0}$. If $f_{\zeta,\alpha} \colon \Rep^{\zetast}(Q,\alpha)\to \BA^1$ is the restriction of $f_\alpha$, then
\[
\mathfrak M(J,\alpha) = [\crit f_\alpha/G_\alpha],\quad \mathfrak M_\zeta(J,\alpha) = [\crit f_{\zeta,\alpha} / \GL_\alpha]
\]
are, by definition, the stacks of $\alpha$-dimensional $J$-modules and $\zeta$-stable $J$-modules.  

\begin{definition}[\cite{RefConifold}]
We define motivic Donaldson--Thomas invariants
\begin{equation}
 \label{eqn:motivicDT}
    \bigl[\mathfrak M(J,\alpha)\bigr]_{\vir} 
    \,=\, \frac{\left[\crit f_\alpha\right]_{\vir}}{\left[\GL_\alpha\right]_{\vir}},\qquad
    \left[\mathfrak M_\zeta(J,\alpha)\right]_{\vir} 
    \,=\, \frac{\left[\crit f_{\zeta,\alpha}\right]_{\vir}}{\left[\GL_\alpha\right]_{\vir}}
\end{equation}
in $\CM_{\BC}$, where $[\GL_\alpha]_{\vir}$ is taken as in Example \ref{example:vir_motive_smooth_scheme}.
The generating function
\begin{equation}\label{definition_AU}
A_U = \sum_{\alpha \in \BN^{Q_0}}\,\bigl[\mathfrak M(J,\alpha)\bigr]_{\vir}\cdot y^\alpha \,\in\,\CT_{Q}
\end{equation}
is called the \emph{universal series} of $(Q,W)$.
\end{definition}

\begin{definition}
A stability parameter $\zeta \in \BR^{Q_0}$ is called \emph{generic} if $\zeta\cdot \underline{\dim}\,\rho \neq 0$ for every nontrivial $\zeta$-stable $J$-module $\rho$.
\end{definition}

%%%%%%%%%%%%%%%%%%%%%%%%%%%%%%%%%%%%%%%%%%%%%%%%%
\subsection{Framed motivic DT invariants}
\label{subsec:motivicDT_quiver_potential}
Let $Q$ be a quiver, $r\geq 1$ be an integer, and consider its $r$-framing $\widetilde Q$ with respect to a vertex $0 \in Q_0$ (Definition \ref{def:r-framing}). 
A representation $\widetilde \rho$ of $\widetilde Q$ can be uniquely written as a pair $(u,\rho)$, where $\rho$ is a representation of $Q$ and $u = (u_1,\dots,u_r)$ is an $r$-tuple of linear maps $u_i\colon \widetilde{\rho}_\infty\to \rho_0$. From now on, we assume our framed representations to satisfy $\dim_{\BC} \widetilde{\rho}_\infty = 1$, so that according to Convention \ref{order_of_dimensions} we can write $\underline{\dim}\,\widetilde \rho=(1,\underline{\dim}\,\rho)$. We also view $\rho$ as a subrepresentation of $\widetilde{\rho}$ of dimension $(0,\underline{\dim}\,\rho)$, based at the vertex $0 \in Q_0$.

\begin{definition}\label{def:framedstability}
Fix $\zeta\in \mathbb R^{Q_0}$.
A representation $(u,\rho)$ of $\widetilde Q$ (resp.~a $\widetilde J$-module) with $\dim_{\BC} \widetilde{\rho}_\infty = 1$ is said to be \emph{$\zeta$-(semi)stable} if it is $(\zeta_\infty,\zeta)$-(semi)stable in the sense of Definition \ref{stablereps}, where $\zeta_\infty = -\zeta\cdot \underline{\dim}\,\rho$.
\end{definition}

We now define motivic DT invariants for moduli stacks of $r$-framed representations of a given quiver $Q$. Fix a superpotential $W$ on $Q$. Let $\widetilde{Q}$ be the $r$-framing of $Q$ at a given vertex $0 \in Q_0$, and let $\widetilde J$ be the Jacobi algebra $J_{\widetilde{Q},W}$, where $W$ is viewed as a superpotential on $\widetilde Q$ in the obvious way. For a generic stability parameter $\zeta \in \BR^{Q_0}$, and an arbitrary dimension vector $\alpha \in \BN^{Q_0}$, set
\[
\zeta_\infty = -\zeta\cdot \alpha,\quad \widetilde \zeta = (\zeta_\infty,\zeta),\quad \widetilde \alpha = (1,\alpha).
\]
As in \S\,\ref{subsec:quiver_with_potential}, consider the trace map  $f_{\widetilde{\alpha}}\colon\Rep(\widetilde{Q},\widetilde{\alpha})\to \BA^1$,
induced by $W$, and its restriction to the framed-stable locus $f_{\widetilde{\zeta},\widetilde{\alpha}}\colon\Rep^{\widetilde{\zeta}\textrm{-st}}(\widetilde{Q},\widetilde{\alpha}) \to \BA^1$. Define the moduli stacks
\[
\mathfrak M(\widetilde J,\alpha) = \left[\crit f_{\widetilde{\alpha}} \,\big/ \GL_\alpha\right],
\quad \mathfrak M_\zeta (\widetilde J,\alpha) = \left[\crit f_{\widetilde{\zeta},\widetilde{\alpha}} \,\big/ \GL_\alpha\right].
\]
Note that we are not quotienting by $\GL_{\widetilde{\alpha}} = \GL_\alpha \times \BC^\times$, but only by $\GL_\alpha$.

\begin{definition}\label{def:motivic_partition_functions}
We define $r$-framed motivic Donaldson--Thomas invariants
\[
    \left[\mathfrak M(\widetilde J,\alpha) \right]_{\vir}
    \,=\,\frac{\left[\crit f_{\widetilde\alpha}\right]_{\vir}}{\left[\GL_{\alpha}\right]_{\vir}},\qquad
    \left[\mathfrak M_\zeta(\widetilde J,\alpha) \right]_{\vir}
    \,=\,\frac{\bigl[\crit f_{\widetilde \zeta,\widetilde\alpha}\bigr]_{\vir}}{\left[\GL_{\alpha}\right]_{\vir}}
\]
and the associated motivic generating functions
\begin{align*}
\widetilde A_U 
&= \sum_{\alpha \in \BN^{Q_0}} \,\left[\mathfrak M(\widetilde J,\alpha) \right]_{\vir}\cdot y^{\widetilde\alpha} \in \mathcal T_{\widetilde Q} \\
\mathsf Z_\zeta 
&= \sum_{\alpha \in \BN^{Q_0}} \,\left[\mathfrak M_\zeta(\widetilde J,\alpha) \right]_{\vir}\cdot y^{\widetilde\alpha} \in \mathcal T_{\widetilde Q}.
\end{align*}
\end{definition}

%%%%%%%%%%%%%%%%%%%%%%%%%%%%%%%%%%%%%%%
\subsection{Dimensional reduction}
We say that a quiver with potential $(Q,W)$ admits a \emph{cut} if there is a subset $I \subset Q_1$ such that every cyclic monomial appearing in $W$ contains exactly one edge in $I$.

If $I$ is a cut for $(Q,W)$, one can define a new quiver $Q_I = (Q_0,Q_1\setminus I)$. Let $J_{W,I}$ be the quotient of $\BC Q_I$ by the two-sided ideal generated by the noncommutative derivatives $\partial W/\partial a$ for $a \in I$.
Let $\Rep(J_{W,I},\alpha) \subset \Rep(Q_I,\alpha)$ be the space of $J_{W,I}$-modules of dimension vector $\alpha$. Then one has the following dimensional reduction principle.

\begin{prop}[{\cite[Prop.~1.15]{RefConifold}}]
\label{prop:cut}
Suppose $I$ is a cut for $(Q,W)$. Set $\dd_I(\alpha) = \sum_{a \in I}\alpha_{t(a)}\alpha_{h(a)}$. Then
\[
A_U = \sum_{\alpha \in \BN^{Q_0}}\BL^{\frac{1}{2}\chi_Q(\alpha,\alpha) + \dd_I(\alpha)}\frac{[\Rep(J_{W,I},\alpha)]}{[\GL_\alpha]} \cdot y^\alpha.
\]
\end{prop}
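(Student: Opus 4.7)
The plan is to exploit the single algebraic consequence of having a cut, namely that $f_\alpha$ becomes linear along a natural trivial-bundle decomposition of $\Rep(Q,\alpha)$, and then invoke a motivic vanishing-cycle \emph{dimensional reduction formula} to read off $[\crit f_\alpha]_{\vir}$ from the zero locus of an associated section.

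First I would decompose $\Rep(Q,\alpha)=\Rep(Q_I,\alpha)\times\Rep(I,\alpha)$, viewed as a trivial vector bundle of rank $r:=\dd_I(\alpha)$ over $X:=\Rep(Q_I,\alpha)$. The cut hypothesis says that every cyclic monomial of $W$ uses exactly one $I$-edge, so we may write $W=\sum_{a\in I}a\cdot w_a$ with $w_a=\partial W/\partial a$ a cyclic word in $Q_I$. This yields
\[
f_\alpha(\rho_I,\rho_{Q_I})=\sum_{a\in I}\Tr\!\bigl(\rho_I(a)\,w_a(\rho_{Q_I})\bigr)=\langle s_\alpha(\rho_{Q_I}),\rho_I\rangle,
\]
so $f_\alpha$ is fibrewise \emph{linear}, determined by a regular section $s_\alpha$ of the dual bundle whose vanishing scheme is, by construction, $\Rep(J_{W,I},\alpha)\subset X$.

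The decisive step is the motivic dimensional reduction theorem: for any regular function $f\colon E\to\BA^1$ that is fibrewise linear on a rank-$r$ vector bundle $E\to X$, with associated section $s\colon X\to E^*$, one has
\[
[\crit f]_{\vir}=\BL^{(r-\dim X)/2}\cdot[Z(s)]\in\CM_{\BC}.
\]
I would prove this by exploiting the fibrewise $\BC^\times$-action scaling the $E$-coordinates, under which $f$ is equivariant of weight $1$. Combined with a Koszul-type argument (or equivalently Bittner's presentation of the Grothendieck ring and deformation to the normal cone), this rigidity forces $[\phi_f]\in\CM_{\BC}^{\hat\mu}$ to be $\hat\mu$-trivial and to collapse to a Tate twist of $[Z(s)]$; the displayed identity then follows from the definition of $[\,\cdot\,]_{\vir}$ recalled in \S\,\ref{subsec:virtual_motive}. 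Variants of this result appear in \cite{BBS} and in the works of Morrison and Davison cited there.

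Finally, I would assemble the pieces. Noting $\dim X=\sum_{a\in Q_1}\alpha_{t(a)}\alpha_{h(a)}-\dd_I(\alpha)$ and $[\GL_\alpha]_{\vir}=\BL^{-(\sum_i\alpha_i^2)/2}[\GL_\alpha]$, and using $\chi_Q(\alpha,\alpha)=\sum_i\alpha_i^2-\sum_{a\in Q_1}\alpha_{t(a)}\alpha_{h(a)}$, the exponent of $\BL$ in
\[
\frac{[\crit f_\alpha]_{\vir}}{[\GL_\alpha]_{\vir}}=\BL^{(r-\dim X+\sum_i\alpha_i^2)/2}\,\frac{[\Rep(J_{W,I},\alpha)]}{[\GL_\alpha]}
\]
simplifies to $\tfrac{1}{2}\chi_Q(\alpha,\alpha)+\dd_I(\alpha)$. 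Substituting into \eqref{definition_AU} and summing over $\alpha\in\BN^{Q_0}$ gives the stated formula. The principal obstacle is the dimensional reduction theorem itself: one must track $[\phi_f]$ as a monodromic motive and verify that the $\hat\mu$-action is trivial, so the answer genuinely lives in $\CM_\BC$. The weight-$1$ equivariance of $f$ is what makes this possible; everything else in the argument is bookkeeping.
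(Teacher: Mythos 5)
The paper itself does not prove this statement: it is quoted directly from \cite[Prop.~1.15]{RefConifold} without argument. Your reconstruction is correct and follows the same route as that reference: view $\Rep(Q,\alpha)\to\Rep(Q_I,\alpha)$ as a trivial rank-$\dd_I(\alpha)$ vector bundle, use the cut hypothesis to make $f_\alpha$ fibrewise linear with section $s_\alpha$ cut out by the $\partial W/\partial a$ for $a\in I$, and apply motivic dimensional reduction (the $\BC^\times$-weight-one rigidity of $f_\alpha$ killing the $\hat\mu$-monodromy and collapsing $[\phi_{f_\alpha}]$ to $-\BL^{\dd_I(\alpha)}[\Rep(J_{W,I},\alpha)]$). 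Your exponent bookkeeping, $\tfrac12\bigl(\dd_I(\alpha)-\dim\Rep(Q_I,\alpha)+\sum_i\alpha_i^2\bigr)=\tfrac12\chi_Q(\alpha,\alpha)+\dd_I(\alpha)$, is right and agrees with the check in Example~\ref{example:cut_3_loop_quiver}, where all the $\BL$ powers cancel.
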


\begin{example}\label{example:cut_3_loop_quiver}
Let $Q=L_3$ be the $3$-loop quiver (see Figure \ref{fig:3loopquiver_framed}, and remove the framing vertex to obtain a picture of this quiver) with the potential $W=A_3[A_1,A_2]$. Notice that $J=J_{L_3,W}=\BC[x,y,z]$, and $I = \set{A_3}$ is a cut for $(L_3,W)$. The quiver $Q_I$ is the $2$-loop quiver and $J_{W,I} = \BC[x,y]$. We have $\dd_I(n)=n^2$ and $\chi_Q(n,n)=-2n^2$. Therefore Proposition \ref{prop:cut} yields an identity
\begin{equation}\label{eqn:cut_yields_FF}
\sum_{n\geq 0}\,\bigl[\mathfrak M(J,n) \bigr]_{\vir}\cdot y^n = \sum_{n\geq 0}\frac{[C_n]}{[\GL_n]}\cdot y^n = \prod_{m \geq 1}\prod_{k \geq 1}\left(1-\BL^{2-k}y^m\right)^{-1},
\end{equation}
where $\Rep(J_{W,I},n)$ is identified with the \emph{commuting variety}
\[
C_n = \Set{(A_1,A_2) \in \End_{\BC}(\BC^n)^{\oplus 2} | [A_1,A_2] = 0} \subset \End_{\BC}(\BC^n)^{\oplus 2},
\]
and the second identity in \eqref{eqn:cut_yields_FF} is the Feit--Fine formula \cite{FF1,BBS,BM15}.
\end{example}

\begin{remark}
The universal series $A_{U}$ has been computed for several homogeneous deformations of the potential $W$ of Example \ref{example:cut_3_loop_quiver} in \cite{DEF-MOT-DT}.
\end{remark}

%%%%%%%%%%%%%%%%%%%%%%%%%%%%%%%%%%%%%%%%%%%%%%%%%%%%%%%%%%%%%%%%%%
\section{Motivic DT invariants of the Quot scheme of points}
\label{sec:quot_scheme_of_points_via_WC}
%%%%%%%%%%%%%%%%%%%%%%%%%%%%%%%%%%%%%%%%%%%%%%%%%%%%%%%%%%%%%%%%%%
\subsection{Stability on the framed 3-loop quiver}
\label{subsec:framed_3_loop_and_its_stab}
The main character in this section is the framed quiver $\widetilde{L}_3$ of Figure \ref{fig:3loopquiver_framed}, which we equip with the superpotential $W=A_3[A_1,A_2]$.

\begin{figure}[ht]
\begin{tikzpicture}[>=stealth,->,shorten >=2pt,looseness=.5,auto]
  \matrix [matrix of math nodes,
           column sep={3cm,between origins},
           row sep={3cm,between origins},
           nodes={circle, draw, minimum size=7.5mm}]
{ 
|(A)| \infty & |(B)| 0 \\         
};
\tikzstyle{every node}=[font=\small\itshape]
\path[->] (B) edge [loop above] node {$A_1$} ()
              edge [loop right] node {$A_2$} ()
              edge [loop below] node {$A_3$} ();

\node [anchor=west,right] at (-0.15,0.11) {$\vdots$};

\draw (A) to [bend left=25,looseness=1] (B) node [midway,above] {};
\draw (A) to [bend left=40,looseness=1] (B) node [midway] {};
\draw (A) to [bend right=35,looseness=1] (B) node [midway,below] {};
\end{tikzpicture}
\caption{The $r$-framed $3$-loop quiver $\widetilde{L}_3$.}\label{fig:3loopquiver_framed}
\end{figure}
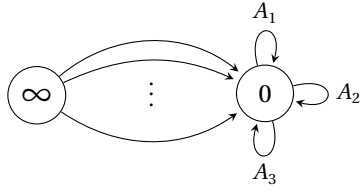

\begin{definition}\label{def:span}
Let $\widetilde{\rho}=(u,\rho)$ be a representation of $\widetilde{L}_3$ of dimension $(1,n)$. We denote by $\braket{u,\rho} \subset \widetilde{\rho}$ the smallest subrepresentation of $\widetilde{\rho}$ containing $u(\rho_{\infty})$. More precisely, if $\rho = (A_1,A_2,A_3) \in \Rep(L_3,n)$, then $\braket{u,\rho}$ is the subrepresentation of $\widetilde\rho$ with $\braket{u,\rho}_\infty = \widetilde{\rho}_\infty = \BC$ and
\[
\braket{u,\rho}_0 = \textrm{span}_{\BC}\Set{A_1^{a_1}A_2^{a_2}A_3^{a_3}\cdot u_\ell(1)|a_i\geq 0,1\leq \ell\leq r} \subset \widetilde{\rho}_0,
\]
and with linear maps induced naturally by those defined by $\widetilde{\rho}$.
\end{definition}

From now on we identify the space of stability parameters for $L_3$ with $\BR$.

\begin{lemma}\label{lemma:stability}
Let $\zeta\in \BR$ be a stability parameter, and let $\widetilde{\rho} = (u,\rho)$ be a representation of  $ \  \widetilde{L}_3$ of dimension $(1,n)$. Set $\widetilde \zeta = (-n\zeta,\zeta)$. Then:
\begin{enumerate}
    \item if $\zeta<0$, $\widetilde{\rho}$ is $\zeta$-semistable if and only if it is $\zeta$-stable if and only if $\widetilde{\rho}=\braket{u,\rho}$;\label{item11}
    \item if $\zeta=0$, $\widetilde{\rho}$ is $\zeta$-semistable;\label{item22}
    \item if $\zeta>0$, $\widetilde{\rho}$ is $\zeta$-semistable if and only if it is $\zeta$-stable if and only if $n=0$.\label{item33}
\end{enumerate}
\end{lemma}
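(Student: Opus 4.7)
The plan is to unwind Definition \ref{def:framedstability} directly and run a sign analysis on the pairing $\widetilde\zeta\cdot\underline{\dim}\,\widetilde\rho'$ as $\widetilde\rho'$ ranges over proper nonzero subrepresentations $\widetilde\rho'\subsetneq\widetilde\rho$. The crucial preliminary observation is that
\[
\widetilde\zeta\cdot(1,n)\;=\;-n\zeta+n\zeta\;=\;0,
\]
so that the phase of $\widetilde\rho$ equals $\tfrac{1}{2}$, and the $\widetilde\zeta$-(semi)stability condition of Definition \ref{stablereps} collapses to asking that $\widetilde\zeta\cdot\underline{\dim}\,\widetilde\rho'\leq 0$ (resp.~$<0$) for every proper nonzero $\widetilde\rho'$.

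Such subrepresentations fall into exactly two families. Those of dimension $(0,\alpha)$ with $0<\alpha\leq n$ are nothing but the nonzero subrepresentations of $\rho$, and for them $\widetilde\zeta\cdot(0,\alpha)=\zeta\alpha$ carries the sign of $\zeta$. Those of dimension $(1,\alpha)$ with $0\leq\alpha<n$ automatically satisfy $\widetilde\rho'_\infty=\widetilde\rho_\infty$, and correspond bijectively to subrepresentations $\rho'\subsetneq\rho$ containing the image $u(\widetilde\rho_\infty)\subset\rho_0$; by Definition \ref{def:span}, at least one such $\rho'$ exists if and only if $\braket{u,\rho}\subsetneq\widetilde\rho$, since $\braket{u,\rho}$ is by construction the smallest subrepresentation containing the framing data. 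For these, $\widetilde\zeta\cdot(1,\alpha)=\zeta(\alpha-n)$ carries the sign of $-\zeta$.

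The three cases then follow by inspection. If $\zeta<0$, the first family automatically satisfies the strict inequality, while the second obstructs even semistability; the obstruction is absent exactly when $\widetilde\rho=\braket{u,\rho}$, and in that situation all remaining inequalities are strict, so $\zeta$-stability and $\zeta$-semistability coincide with this condition, giving (1). If $\zeta=0$, every pairing vanishes and semistability holds trivially, giving (2). If $\zeta>0$, the second family is harmless while the first family obstructs semistability as long as $\rho\neq 0$; hence semistability forces $n=0$, and when $n=0$ the representation $\widetilde\rho$ has no proper nonzero subrepresentation, so it is vacuously $\zeta$-stable, giving (3).

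The only step requiring care is the dictionary between type $(1,\alpha)$ subrepresentations of $\widetilde\rho$ and subrepresentations of $\rho$ containing $u(\widetilde\rho_\infty)$, which lets one reformulate nonexistence of such subrepresentations as the equality $\widetilde\rho=\braket{u,\rho}$. Everything else is an elementary sign inspection.
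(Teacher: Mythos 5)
Your proof is correct, and in fact more self-contained than the one in the paper: the paper handles the case $\zeta>0$ by exhibiting the destabilising subobject $\rho\subset\widetilde\rho$ of dimension $(0,n)$ and noting that $\widetilde\rho$ is simple when $n=0$, dismisses $\zeta=0$ as trivial, and \emph{delegates} case (1) to \cite[Prop.~2.4]{BR18}. You instead run a single sign analysis for all three cases, based on the observation that $\varphi_{\widetilde\zeta}(\widetilde\rho)=1/2$ (because the real part of $\mathrm{Z}_{\widetilde\zeta}(1,n)$ vanishes) reduces semistability to the condition $\widetilde\zeta\cdot\underline{\dim}\,\widetilde\rho'\leq 0$ for all proper nonzero $\widetilde\rho'$, and on the clean dichotomy of subrepresentations according to whether their $\infty$-component is $0$ or $1$ (the only possibilities since $\dim\widetilde\rho_\infty=1$). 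The identification of $\braket{u,\rho}$ as the unique minimal subrepresentation of dimension $(1,*)$ is exactly the right reformulation, and your case distinctions follow correctly from it. This is the same style of argument as the paper (and as \cite{BR18}), just carried out uniformly rather than by citation; what it buys you is a fully self-contained proof of all three items, at the cost of a few extra lines. No gaps.
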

\begin{proof}
For the case $\zeta<0$ we refer to \cite[Prop. 2.4]{BR18}. Consider the case $\zeta>0$. If we had $n = \dim_{\BC}\widetilde{\rho}_0 > 0$, then $\rho \subset \widetilde{\rho}$ would be destabilising, for $\mathrm{Z}_{\widetilde{\zeta}}(0,n) = -n\zeta + n\sqrt{-1}$ implies $\varphi_{\widetilde{\zeta}}(\rho) > 1/2 = \varphi_{\widetilde{\zeta}}(\widetilde{\rho})$, since $\mathrm{Z}_{\widetilde{\zeta}}(1,n)=(n+1)\sqrt{-1}$ has vanishing real part.
On the other hand, if $n = 0$ then $\widetilde{\rho}$ is simple and hence $\zeta$-stable.
In the case $\zeta=0$ there is nothing to prove, as all representations have phase $1/2$. 
\end{proof}

Consider the following regions of the space of stability parameters $\BR$:
\begin{itemize}
    \item [$\circ$] $\Omega_{+} = \set{\zeta\in \BR | \zeta < 0 }$,
    \item [$\circ$] $\Omega_{0} = \set{\zeta\in \BR | \zeta=0 }$,
    \item [$\circ$] $\Omega_{-} = \set{\zeta\in \BR | \zeta > 0}$.
\end{itemize}
By Lemma \ref{lemma:stability} the space of stability parameters on $\widetilde{{L}}_3$ admits a particularly simple wall-and-chamber decomposition $\BR = \Omega_{+} \amalg \Omega_{0} \amalg \Omega_{-}$ with one wall (the origin) and two chambers. 

%%%%%%%%%%%%%%%%%%%%%%%%%%%%%%%%%%%%%%%%%%%%%%%%%%%%%%%%%%%%%%%%%%
\subsection{The virtual motive of the Quot scheme of points}
Let $\widetilde{L}_3$ be the $r$-framed $3$-loop quiver (Figure \ref{fig:3loopquiver_framed}), and fix the superpotential $W=A_3[A_1,A_2]$. Fix a stability parameter $\zeta^+ \in \Omega_+ = \BR_{<0}$. Fixing $n\geq 0$ and setting $\widetilde{\zeta}^+ = (-n\zeta,\zeta)$, $\widetilde n = (1,n)$, the quotient stack
\[
\mathfrak M_{\zeta^+}(\widetilde{L}_3,n) = \bigl[\Rep^{\widetilde{\zeta}^+\textrm{-st}}(\widetilde{L}_3,\widetilde n)\, \big/ \GL_n\bigr]
\]
is a smooth quasi-projective \emph{variety} of dimension $2n^2+rn$, called the \emph{noncommutative Quot scheme} in \cite{BR18}. The regular function $f_n\colon \Rep(\widetilde{L}_3,\widetilde n) \to \BA^1$ given by taking the trace of $W$ descends to a regular function on $\mathfrak M_{\zeta^+}(\widetilde{L}_3,n)$, still denoted $f_n$. We have the following description of the Quot scheme of length $n$ quotients of $\OO_{\BA^3}^{\oplus r}$.

\begin{prop}[{\cite[Thm.~2.6]{BR18}}]\label{prop:SPOT_A^3}
There is an identity of closed subschemes
\[
\Quot_{\BA^3}(\mathscr O^{\oplus r},n) = \crit(f_n) \subset \mathfrak M_{\zeta^+}(\widetilde{L}_3,n).
\]
\end{prop}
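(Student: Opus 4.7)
The plan is to exhibit a natural isomorphism of functors between $\Quot_{\BA^3}(\OO^{\oplus r}, n)$ and $\crit(f_n) \subset \mathfrak M_{\zeta^+}(\widetilde{L}_3, n)$, and then verify that this upgrades to an identity of closed subschemes by matching the defining equations entry by entry.

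First I would unpack the ambient smooth variety. By Lemma~\ref{lemma:stability}, a representation $\widetilde\rho = (u, A_1, A_2, A_3)$ of $\widetilde{L}_3$ of dimension $(1,n)$ is $\widetilde\zeta^+$-stable precisely when $u_1,\ldots,u_r$ generate $\BC^n$ as a module over the free algebra $\BC\braket{x,y,z}$, with $x, y, z$ acting via $A_1, A_2, A_3$. Such stable points have trivial stabilizer, so $\GL_n$ acts freely on the stable locus and the quotient $\mathfrak M_{\zeta^+}(\widetilde{L}_3,n)$ is a smooth quasi-projective scheme. Next I would compute $\crit(f_n)$ explicitly: expanding $f_n = \Tr(A_3 A_1 A_2 - A_3 A_2 A_1)$ and differentiating in the matrix entries of each $A_i$ yields the three commutator relations $[A_1,A_2] = [A_2,A_3] = [A_1,A_3] = 0$, so $\crit(f_n)$ is scheme-theoretically cut out by the ideal generated by the entries of these commutators.

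The final step is a functor-of-points comparison. For any test scheme $S$, an $S$-point of $\Quot_{\BA^3}(\OO^{\oplus r}, n)$ is an $\CO_{S\times \BA^3}$-linear surjection $\OO_{S\times \BA^3}^{\oplus r} \twoheadrightarrow \CF$ with $\CF$ flat of relative length $n$; since $\BA^3$ is affine, this is equivalent to an $\CO_S[x,y,z]$-linear surjection $\CO_S[x,y,z]^{\oplus r} \twoheadrightarrow \CE$ with $\CE$ locally free of rank $n$ over $\CO_S$. After locally trivialising $\CE \simeq \CO_S^{\oplus n}$, this data becomes $r$ sections $u_1,\ldots,u_r \in \CO_S^{\oplus n}$ together with three pairwise commuting endomorphisms $A_1, A_2, A_3$ encoding the action of $x, y, z$, modulo the $\GL_n(\CO_S)$-gauge on the choice of trivialisation; the surjectivity condition translates into the generation condition appearing in Step~1. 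This is exactly an $S$-point of $\crit(f_n) \subset \mathfrak M_{\zeta^+}(\widetilde{L}_3, n)$.

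The main obstacle I expect is promoting this bijection on $S$-points into a scheme-theoretic identity. The crucial verification is that the ideal making $\CE$ into a genuine $\CO_S[x,y,z]$-module (rather than a module over the free algebra $\CO_S\braket{x,y,z}$) coincides, entry by entry, with the Jacobi ideal generated by the partial derivatives of $f_n$. This is where the specific form of the potential $W = A_3[A_1,A_2]$ and the use of \emph{all three} noncommutative derivatives enter essentially: they cut out exactly the commutative polynomial ring and nothing more. Once this ideal-theoretic match is confirmed, the two closed subschemes of $\mathfrak M_{\zeta^+}(\widetilde{L}_3, n)$ represent the same subfunctor, and Yoneda delivers the claimed equality.
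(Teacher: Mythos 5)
The paper does not reproduce a proof here but cites \cite[Thm.~2.6]{BR18}, and your outline is in substance the argument used there: characterise the $\zeta^+$-stable framed representations (Lemma~\ref{lemma:stability}\,\eqref{item11}) as those generated over $\BC\langle x,y,z\rangle$ by the framing vectors, compute the Jacobi ideal of $W=A_3[A_1,A_2]$ to get the three commutator relations, and then match $S$-points of $\Quot_{\BA^3}(\OO^{\oplus r},n)$ with $S$-points of $\crit(f_n)$ via pushforward of the universal quotient and a local trivialisation. You correctly flag the one delicate step --- promoting the bijection of points to an identity of scheme structures --- which is exactly the ideal-theoretic comparison in families carried out in \cite{BR18}; so the approach matches the cited source.
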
  

Thanks to Proposition \ref{prop:SPOT_A^3}, we can form the virtual motives of the Quot scheme, as in \S\,\ref{subsec:virtual_motive}, and define their generating function
\begin{equation}\label{eqn:motivic_DT_rank_r}
\DT_r^{\points}(\BA^3,q) = \sum_{n\geq 0}\,\left[\Quot_{\BA^3}(\mathscr O^{\oplus r},n) \right]_{\vir}\cdot q^n\,\in\,\CM_{\BC}\llbracket q \rrbracket.
\end{equation}
\begin{remark}
The main result of \cite{BBS} is the formula
\[
\DT_1^{\points}(\BA^3,q) = \prod_{m\geq 1}\prod_{k=0}^{m-1}\left(1-\BL^{2+k-\frac{m}{2}}q^m\right)^{-1}.
\]
The series $\DT_1^{\points}(Y,q)$ studied in \cite{BBS} for an arbitrary smooth $3$-fold $Y$ also appeared in \cite{DavisonR} as the wall-crossing factor in the motivic DT/PT correspondence based at a fixed smooth curve $C \subset Y$ in $Y$.  This correspondence refined its enumerative counterpart \cite{LocalDT,Ricolfi2018}. The same phenomenon occurred in \cite{RefConifold,MorrNagao} in the context of framed motivic DT invariants. See \cite[\S\,4]{Quot19} for a generalisation $\DT_r^{\points}(Y,q)$ of \eqref{eqn:motivic_DT_rank_r} to an arbitrary smooth $3$-fold $Y$. See \cite{ricolfi2019motive} for a plethystic formula expressing the \emph{naive} motives $[\Quot_Y(F,n)] \in K_0(\Var_{\BC})$ in terms of the motives of the \emph{punctual} Quot schemes.
\end{remark}

The following consideration will constitute the final step in proving Theorem \ref{thm:main_motivic}.

\begin{lemma}\label{thm:quot_partition_function}
There is an identity
\begin{equation}\label{eqn:Partition_Function_QUOT}
\prod_{m\geq 1}\prod_{k=0}^{rm-1}\left(1-\BL^{2+k-\frac{rm}{2}}q^m\right)^{-1} \,=\, \prod_{i=1}^r \DT_1^{\points}\left(\BA^3,q\BL^{\frac{-r-1}{2}+i}\right).
\end{equation}
\end{lemma}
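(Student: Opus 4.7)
The plan is to reduce the identity to an elementary reindexing, after substituting the rank one formula of Behrend--Bryan--Szendr\H{o}i into the right-hand side. First, I would expand
\[
\DT_1^{\points}\left(\BA^3,q\BL^{\frac{-r-1}{2}+i}\right) = \prod_{m\geq 1}\prod_{k=0}^{m-1}\left(1-\BL^{2+k-\frac{m}{2}+m\left(\frac{-r-1}{2}+i\right)}q^{m}\right)^{-1}
\]
for each $i \in \{1,\dots,r\}$, and regroup the resulting triple product on the right-hand side of \eqref{eqn:Partition_Function_QUOT} as
\[
\prod_{m\geq 1}\prod_{i=1}^{r}\prod_{k=0}^{m-1}\left(1-\BL^{E(m,i,k)}q^{m}\right)^{-1},
\]
where $E(m,i,k) = 2+k-\frac{m}{2} + m\bigl(\frac{-r-1}{2}+i\bigr)$. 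A direct simplification gives
\[
E(m,i,k) = 2 - \frac{rm}{2} + \bigl(k + m(i-1)\bigr).
\]

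Next, I would compare exponents of $\BL$ for each fixed power $q^m$. On the left-hand side of \eqref{eqn:Partition_Function_QUOT}, the exponents are $\{2+k-rm/2 : 0\leq k \leq rm-1\}$. On the right-hand side, by the computation above, the exponents are $\{2 - rm/2 + (k + m(i-1)) : 0\leq k \leq m-1,\ 1\leq i \leq r\}$. The division-with-remainder map
\[
\{0,\dots,m-1\}\times\{1,\dots,r\}\;\longrightarrow\;\{0,\dots,rm-1\},\qquad (k,i)\longmapsto k+m(i-1),
\]
is a bijection, so the two multisets of exponents coincide. This gives an equality of the coefficients of $q^m$ for each $m$, and hence the identity of infinite products.

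The argument is a purely combinatorial bookkeeping exercise; there is no conceptual obstacle. The only point requiring care is the correct tracking of the half-integer shift $m(-r-1)/2 + mi$ produced by the substitution $q \mapsto q\BL^{(-r-1)/2+i}$, and the observation that the shifts $i = 1, 2, \dots, r$ stratify the range $\{0,1,\dots,rm-1\}$ into $r$ consecutive blocks of length $m$. Indeed, the formula \eqref{formula_product} is engineered so that the rank $r$ product factors into $r$ rank one blocks whose $\BL$-shifts tile precisely this range.
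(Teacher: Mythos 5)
Your proof is correct and follows essentially the same route as the paper's: substitute the rank-one Behrend--Bryan--Szendr\H{o}i formula, simplify the exponent of $\BL$, and observe that the division-with-remainder map $(k,i)\mapsto k+m(i-1)$ tiles $\{0,\dots,rm-1\}$. You have merely made explicit the bijection that the paper leaves implicit in its final reindexing step.
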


\begin{proof} 
The claimed identity follows from a simple manipulation:
\begin{align*}
    \prod_{i=1}^r\DT_1^{\points}(\BA^3,q\BL^{\frac{-r-1}{2}+i}) 
    &\,=\,\prod_{i=1}^r \prod_{m\geq 1}\prod_{k=0}^{m-1}\left(1-\BL^{2+k-\frac{m}{2}}\BL^{\frac{-r-1}{2}m+im}q^m\right)^{-1} \\
    &\,=\,\prod_{m\geq 1}\prod_{k=0}^{m-1}\prod_{i=1}^r \left(1-\BL^{2+k+(i-1)m-\frac{rm}{2}}q^m\right)^{-1} \\
    &\,=\,\prod_{m\geq 1}\prod_{k=0}^{rm-1}\left(1-\BL^{2+k-\frac{rm}{2}}q^m\right)^{-1}.\qedhere
\end{align*}
\end{proof}

The identification of \eqref{eqn:Partition_Function_QUOT} with $\DT_r^{\points}(\BA^3,q)$ is proven in the PhD theses of the first and third author \cite{Cazzaniga_Thesis,ThesisR}. Both proofs follow the technique introduced in the $r=1$ case by Behrend--Bryan--Szendr\H{o}i \cite{BBS}. In the next subsection, we provide a new proof of Theorem \ref{thm:main_motivic}. We exploit an $r$-framed version of motivic wall-crossing. This technique, inspired by \cite{Mozgovoy_Framed_WC,RefConifold,MorrNagao}, will be applied to small crepant resolutions of affine toric Calabi--Yau $3$-folds in \cite{Cazza_Ric}.

%%%%%%%%%%%%%%%%%%%%%%%%%%%%%%%%%
\subsection{Calculation via wall-crossing}
\label{subsec:calculation_framed_3_loop_quiver}
In this subsection we prove Theorem \ref{thm:main_motivic}. Consider the universal generating function
\[
\widetilde A_U = \sum_{n\geq 0}\,\left[\mathfrak M(\widetilde J,n) \right]_{\vir}\cdot  y^{(1,n)}\,\in\,\mathcal T_{\widetilde L_3}
\]
as an element of the motivic quantum torus. To (generic) stability parameters $\zeta^{\pm} \in \Omega_{\pm}$ we associate elements (cf.~Definition \ref{def:motivic_partition_functions})
\[
\mathsf Z_{\zeta^\pm} = \sum_{n\geq 0}\,\left[\mathfrak M_{\zeta^{\pm}}(\widetilde J,n) \right]_{\vir}\cdot  y^{(1,n)}\,\in\,\mathcal T_{\widetilde L_3}.
\]
By Lemma \ref{lemma:stability}\,\eqref{item33}, we have an identity
\begin{equation}\label{A_minus}
\mathsf Z_{\zeta^-} = y_{\infty} = y^{(1,0)},
\end{equation}
whereas the series $\mathsf Z_{\zeta^+}$ is, essentially, a ``shift'' of the generating function $\DT_r^{\points}(\BA^3,y^{(0,1)})$. More precisely, by Proposition \ref{prop:SPOT_A^3} we have an identification
\[
\mathfrak M_{\zeta^{+}}(\widetilde J,n) = \Quot_{\BA^3}(\OO^{\oplus r},n) \subset \mathfrak M_{\zeta^+}(\widetilde{L}_3,n)
\]
of critical loci sitting inside the noncommutative Quot scheme; in particular, the associated virtual motives are the same. The shift is intended in the following sense: since $\braket{(0,n),(1,0)}_{\widetilde{L}_3} = rn$, the product rule \eqref{eqn:product_in_TQ} yields the identity
\begin{equation}\label{eqn:y1n}
    y^{(1,n)} = \BL^{-\frac{rn}{2}} y^{(0,n)}\cdot y_{\infty}\,\in\,\mathcal T_{\widetilde{L}_3}.
\end{equation}
Since we can express $y^{(0,n)}$ as the $n$-fold product of $y^{(0,1)}$ with itself, we obtain 
\begin{equation}\label{eqn:A+}
\mathsf Z_{\zeta^+} = \DT_r^{\points}\left(\BA^3,\BL^{-\frac{r}{2}} y^{(0,1)}\right)\cdot y_{\infty}.
\end{equation}
The last generating function we need to analyse is
\[
A_U = \sum_{n\geq 0}\,\bigl[\mathfrak{M}(J,n) \bigr]_{\vir} \cdot y^{(0,n)}\,\in\,\mathcal T_{L_3}\,\subset\, \mathcal T_{\widetilde{L}_3},
\]
whose $n$-th coefficient is the virtual motive of the stack of $0$-dimensional $\BC[x,y,z]$-modules of length $n$. This was already computed in Example \ref{example:cut_3_loop_quiver}:
\begin{equation}\label{eqn:BU}
A_U(y) = \prod_{m \geq 1}\prod_{k \geq 1}\left(1-\BL^{2-k}y^m\right)^{-1}.
\end{equation}

The next ingredient of the proof is a particular instance of Mozgovoy's motivic wall-crossing formula \cite{Mozgovoy_Framed_WC}.

\begin{prop}\label{prop:motivic_WC_3loop_quiver}
In $\mathcal T_{\widetilde{L}_3}$, there are identities
\begin{equation}\label{eqn:WC_3loop_quiver}
\mathsf Z_{\zeta^+}\cdot A_U = \widetilde A_U = A_U \cdot \mathsf Z_{\zeta^-}.
\end{equation}
\end{prop}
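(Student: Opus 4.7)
My plan is to establish both identities in \eqref{eqn:WC_3loop_quiver} as instances of Mozgovoy's motivic wall-crossing formula for framed quivers with potential \cite{Mozgovoy_Framed_WC}. The decisive simplification here is that, by Lemma \ref{lemma:stability}, the stability space $\BR$ for $\widetilde{L}_3$ is divided by the single wall at the origin into the two chambers $\Omega_\pm$, so the general wall-crossing identity collapses to a two-factor product in each chamber. For each of the two stabilities, I would exhibit on an arbitrary framed representation a canonical two-step Harder--Narasimhan-type filtration whose associated graded pieces are the ``framed-semistable part'' and the ``unframed part'' in the appropriate order, and then check that the induced stratification of $\mathfrak{M}(\widetilde J, n)$ is compatible with the critical-locus structure.

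For the right-hand identity $\widetilde A_U = A_U\cdot \mathsf Z_{\zeta^-}$, I would first invoke Lemma \ref{lemma:stability}\,\eqref{item33} to obtain $\mathsf Z_{\zeta^-} = y_{\infty}$; then, given an arbitrary framed representation $\widetilde\rho = (u,\rho)$ of dimension $(1,n)$ with $n>0$, the unframed subrepresentation $\rho \subset \widetilde{\rho}$ is the $\zeta^-$-destabilising sub, and the quotient $\widetilde{\rho}/\rho$ has dimension $(1,0)$, hence is automatically $\zeta^-$-stable. For the dual identity $\mathsf Z_{\zeta^+}\cdot A_U = \widetilde A_U$, Lemma \ref{lemma:stability}\,\eqref{item11} identifies the $\zeta^+$-stable framed representations with the cyclic ones $\widetilde\rho = \braket{u,\rho}$; the canonical filtration is then $\braket{u,\rho}\subset \widetilde\rho$, with $\zeta^+$-stable sub and unframed quotient. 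In both cases the filtration stratifies $\mathfrak M(\widetilde J,n)$ into pieces indexed by the dimension vector of the sub, and the motivic class of the total stack decomposes as a convolution over such strata.

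The passage from this stratification to the product identity in $\mathcal T_{\widetilde{L}_3}$ proceeds via a Thom--Sebastiani-type factorisation of the motivic vanishing cycle: the twist $\BL^{\frac{1}{2}\langle -,-\rangle_{\widetilde L_3}}$ built into the quantum-torus product \eqref{eqn:product_in_TQ} is engineered so as to absorb the virtual motive of the affine fiber parametrising the extension (note in particular that $\langle (0,n),(1,0)\rangle_{\widetilde L_3} = rn$ matches the dimension of the space of framings, in agreement with \eqref{eqn:y1n}). The main technical obstacle is the compatibility of the critical locus of $f_{\widetilde n}$ with the filtration: one must verify that $W = A_3[A_1,A_2]$, viewed as a function on the parabolic-type stratum, splits (up to terms contributing only a motivic shift) as the sum of the traces on the two filtration steps. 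This invariance under the parabolic subgroup preserving the filtration is exactly what Mozgovoy's setup axiomatises, so the task reduces to checking that the two filtrations produced by Lemma \ref{lemma:stability} satisfy the hypotheses of his general theorem.
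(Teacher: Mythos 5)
Your proof takes essentially the same approach as the paper: invoking Mozgovoy's framed wall-crossing formula, and for each chamber exhibiting the canonical two-step filtration — $\braket{u,\rho}\subset\widetilde\rho$ with framed-stable sub and unframed quotient for $\zeta^+\in\Omega_+$, and $\rho\subset\widetilde\rho$ with unframed sub and simple framed quotient for $\zeta^-\in\Omega_-$ — exactly as the paper does, using Lemma \ref{lemma:stability} parts \eqref{item11} and \eqref{item33} respectively. Your extra remarks on how the quantum-torus twist $\BL^{\frac{1}{2}\braket{-,-}_{\widetilde L_3}}$ and the Thom--Sebastiani factorisation of the vanishing cycle turn the stratification into a product identity are accurate elaborations of what the paper leaves implicit in the reference to \cite{Mozgovoy_Framed_WC}, not a divergence in method.
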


\begin{proof}
Let $\widetilde{\rho} = (u,\rho)$ be a $\widetilde{J}$-module. Consider $\zeta^+\in \Omega_{+}$, and let $\braket{u,\rho}\subset \widetilde{\rho}$ be the submodule  introduced in Definition \ref{def:span}. We have that $\braket{u,\rho}$ is $\zeta^+$-stable by Lemma \ref{lemma:stability}\,\eqref{item11} and the quotient $\widetilde{\rho}/\braket{u,\rho}$ is supported at the vertex $0$. From this we deduce the decomposition $\widetilde{A}_U=\mathsf Z_{\zeta^+}\cdot A_{U}$. Consider now $\zeta^- \in \Omega_{-}$. The quotient of $\widetilde{\rho}$  by the submodule $\rho$ based at the vertex $0$ is the simple module supported at the framing vertex $\infty$. By Lemma \ref{lemma:stability}\,\eqref{item33} this is the unique $\zeta^-$-stable module for the current choice of $\zeta^-$, so we obtain the decomposition $\widetilde{A}_U=A_{U}\cdot \mathsf Z_{\zeta^-}$.
\end{proof}

\setlength{\abovecaptionskip}{-40pt}
\begin{figure}[!ht]     
     \begin{center}
     \begin{tikzpicture}[>=stealth',shorten >=1pt,auto,node distance=3cm,
  thick,main node/.style={circle,draw,minimum size=3mm}]
  
  \node at (-3,-1) {$\Omega_{+}$};
  \node at (0,-1) {$\Omega_{0}$};
  \node at (3,-1) {$\Omega_{-}$};
  \node (a) at (0,3) {};
  \node (b) at (0,-3) {};
  \draw (-5,0)--(5,0);
  \draw[line width=1.8] (0.2,-0.2)--(-0.2,0.2);
  \draw[line width=1.8] (-0.2,-0.2)--(0.2,0.2);
 
   \begin{pgflowlevelscope}{\pgftransformscale{0.45}} 
        \node[main node](0) at (8.5-16,6-2)  {$v_{0}$};
        \node[circle,draw,minimum size=3mm] (i) at (8.5-16,4-2)  {$v_{\infty}$};
        \node at (7.5-16,5.5-2) {$\subseteq$};
        \node (dots) at (8.5-16,5-2) {$\ldots$};
        \node[main node](0') at (6.5-16,6-2)  {$v_{0}$};
        \node[circle,draw,minimum size=3mm] (i') at (6.5-16,4-2)  {$v_{\infty}$};
        \node (dots) at (6.5-16,5.5-2) {$\ldots$};
        \node (n) at (9.2-16,6-2) {$n$};
        \node (n') at (5.7-16,6-2) {$n'$};
        \node (1) at (9.2-16,4-2) {$1$};
        \node (1') at (5.7-16,4-2) {$1$};
        \path[every node/.style={font=\sffamily\small,
  	    	fill=white,inner sep=1pt},every loop/.style={looseness=12}]
  	        % Right-hand-side arrows rendered from top to bottom to
  	        % achieve proper rendering of labels over arrows.
            (0) edge [->,in=110, out=70,  loop] node {}(0)
            (0) edge [->,in=120, out=60, loop] node {}(0)    
            (0) edge [->,in=130, out=50, loop] node {}(0)
            (i) 
                edge [->,bend left=20] node[] {} (0)        
                edge [->,bend left=-20] node[] {} (0); 
        
        \path[every node/.style={font=\sffamily\small,
  		    fill=white,inner sep=1pt},every loop/.style={looseness=12}]
  	        % Right-hand-side arrows rendered from top to bottom to
  	        % achieve proper rendering of labels over arrows.
            (0') edge [->,in=110, out=70,  loop] node {}(0')
            (0') edge [->,in=120, out=60, loop] node {}(0')    
            (0') edge [->,in=130, out=50, loop] node {}(0')
            (i') 
                edge [->,bend left=20] node[] {} (0')        
                edge [->,bend left=-20] node[] {} (0'); 
    \end{pgflowlevelscope}  
  %% OMEGA_- chamber    
  \begin{pgflowlevelscope}{\pgftransformscale{0.45}} 
        \node[main node](0) at (-8.5+16,-4.5+8)  {$v_{0}$};
        \node at (-7.5+16,-4.5+8) {$\subseteq$};
        \node (dots) at (-6.5+16,-5+8) {$\ldots$};
        \node[main node](0') at (-6.5+16,-4+8)  {$v_{0}$};
        \node[circle,draw,minimum size=3mm] (i') at (-6.5+16,-6+8)  {$v_{\infty}$};
        \node (n) at (-9.2+16,-4.5+8) {$n$};
        \node (n') at (-5.7+16,-4+8) {$n$};
        \node (1') at (-5.7+16,-6+8) {$1$};
        % Right-hand-side arrows rendered from top to bottom to
  	    % achieve proper rendering of labels over arrows.
            \path[every node/.style={font=\sffamily\small,
  	    	fill=white,inner sep=1pt},every loop/.style={looseness=12}]
  	        % Right-hand-side arrows rendered from top to bottom to
  	        % achieve proper rendering of labels over arrows.
            (0) edge [->,in=110, out=70,  loop] node {}(0)
            (0) edge [->,in=120, out=60, loop] node {}(0)    
            (0) edge [->,in=130, out=50, loop] node {}(0);
            \path[every node/.style={font=\sffamily\small,
  		    fill=white,inner sep=1pt},every loop/.style={looseness=12}]
  	        % Right-hand-side arrows rendered from top to bottom to
  	        % achieve proper rendering of labels over arrows.
            (0') edge [->,in=110, out=70,  loop] node {}(0')
            (0') edge [->,in=120, out=60, loop] node {}(0')    
            (0') edge [->,in=130, out=50, loop] node {}(0')
            (i') 
                    edge [->,bend left=20] node[] {} (0')        
                    edge [->,bend left=-20] node[] {} (0');  		
  \end{pgflowlevelscope}    
  %CRITICAL CHAMBER    
  \begin{pgflowlevelscope}{\pgftransformscale{0.45}} 
        \node[main node](0) at (.5,4)  {$v_{0}$};
        \node[circle,draw,minimum size=3mm] (i) at (.5,2)  {$v_{\infty}$};
        \node (dots) at (.5,3) {$\ldots$};
        \node (dots) at (-.5,3.5) {$\subseteq$};
        \node (dots) at (-1.3,3.5) {$0$};
        \node (n) at (-.2,4) {$n$};
        \node (1) at (-.2,2) {$1$};
  
        \path[every node/.style={font=\sffamily\small,
  		        fill=white,inner sep=1pt},every loop/.style={looseness=12}]
  	            % Right-hand-side arrows rendered from top to bottom to
  	            % achieve proper rendering of labels over arrows.
                (0) edge [->,in=110, out=70,  loop] node {}(0)
                (0) edge [->,in=120, out=60, loop] node {}(0)    
                (0) edge [->,in=130, out=50, loop] node {}(0)
                (i) 
                    edge [->,bend left=20] node[] {} (0)        
                    edge [->,bend left=-20] node[] {} (0); 
  \end{pgflowlevelscope}
\end{tikzpicture}
\caption{An illustration of Proposition \ref{prop:motivic_WC_3loop_quiver}.}\label{fig_wall}
\end{center}
\end{figure}
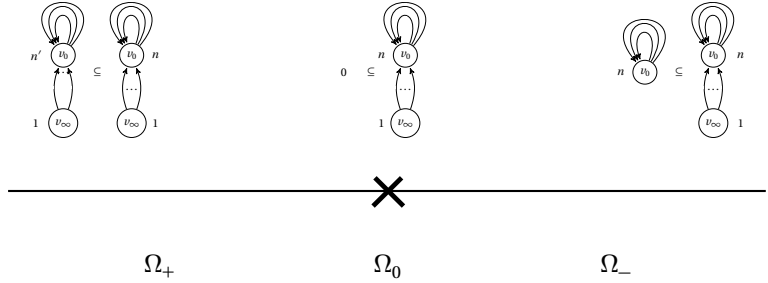

Note that we have an identity
\[
y_{\infty}\cdot y^{(0,n)} = \BL^{-\frac{rn}{2}}\cdot y^{(1,n)} = \BL^{-rn}\cdot y^{(0,n)}\cdot y_{\infty},
\]
where we have used \eqref{eqn:y1n} for the second equality.
By Formula \eqref{eqn:A+}, the left-hand term of Formula \eqref{eqn:WC_3loop_quiver} can then be rewritten as
\begin{multline*}
\DT_r^{\points}\left(\BA^3,\BL^{-\frac{r}{2}} y^{(0,1)}\right)\cdot \sum_{n\geq 0}\,\bigl[\mathfrak{M}(J,n)\bigr]_{\vir}\cdot y_{\infty} \cdot y^{(0,n)} \\
=\DT_r^{\points}\left(\BA^3,\BL^{-\frac{r}{2}} y^{(0,1)}\right)\cdot \sum_{n\geq 0}\,\bigl[\mathfrak{M}(J,n)\bigr]_{\vir} \BL^{-rn}\cdot y^{(0,n)}\cdot y_{\infty}.
\end{multline*}

Therefore, by Equation \eqref{A_minus}, identifying the two expressions for $\widetilde{A}_U$ in Equation \eqref{eqn:WC_3loop_quiver} yields
\[
\DT_r^{\points}\left(\BA^3,\BL^{-\frac{r}{2}} y^{(0,1)}\right)\cdot A_U\left(\BL^{-r}y^{(0,1)}\right) \cdot y_{\infty} = A_U\left(y^{(0,1)}\right)\cdot y_{\infty},
\]
which is equivalent to
\[
\DT_r^{\points}\left(\BA^3,\BL^{-\frac{r}{2}} y^{(0,1)}\right) = \frac{A_U\left(y^{(0,1)}\right)}{A_U\left(\BL^{-r}y^{(0,1)}\right)}.
\]
Setting $q = \BL^{-\frac{r}{2}}y^{(0,1)}$, and using Equation \eqref{eqn:BU}, a simple substitution yields
\begin{align*}
    \DT_r^{\points}(\BA^3,q) &= \frac{A_U\left(\BL^{\frac{r}{2}}q\right)}{A_U\left(\BL^{-\frac{r}{2}}q\right)} \\
    &=\prod_{m\geq 1}\prod_{j\geq 0}\frac{\left(1-\mathbb L^{1-j+\frac{rm}{2}}q^m\right)^{-1}}{\left(1-\mathbb L^{1-j-\frac{rm}{2}}q^m\right)^{-1}}\\
&=\prod_{m\geq 1}\prod_{j=0}^{rm-1}\left(1-\mathbb L^{1-j+\frac{rm}{2}}q^m\right)^{-1}\\
&=\prod_{m\geq 1}\prod_{k=0}^{rm-1}\left(1-\mathbb L^{2+k-\frac{rm}{2}}q^m\right)^{-1}.
\end{align*}
Formula \eqref{formu} is proved. By Lemma \ref{thm:quot_partition_function}, Theorem \ref{thm:main_motivic} is proved.

\begin{remark}
The generating function $\DT_r^{\points}(\BA^3,(-1)^rq)$ admits the plethystic expression
\[
\DT_r^{\points}(\BA^3,(-1)^rq) = 
\Exp\left(\frac{(-1)^rq\mathbb L^{\frac{3}{2}}}{\bigl(1-(-\mathbb L^{-\frac{1}{2}})^rq\bigr)\bigl(1-(-\mathbb L^{\frac{1}{2}})^rq\bigr)}\frac{\mathbb L^{-\frac{r}{2}}-\mathbb L^{\frac{r}{2}}}{\mathbb L^{-\frac{1}{2}}-\mathbb L^{\frac{1}{2}}}\right).
\]
This is exploited in \cite[\S\,4]{Quot19} to define $[\Quot_Y(F,n)]_{\vir}$ for all $3$-folds $Y$ and locally free sheaves $F$ over $Y$.
\end{remark}

\begin{remark}
The Euler number specialisation $\BL^{1/2} \to -1$ applied to Formula \eqref{formu} yields a formula for the generating function of \emph{virtual Euler characteristics} of Quot schemes,
\[
\sum_{n \geq 0} \chi_{\vir}(\Quot_{\BA^3}(\OO^{\oplus r},n))\cdot q^n = \mathsf M((-1)^rq)^r,
\]
where $\mathsf M(q) = \prod_{m\geq 1}(1-q^m)^{-m}$ is the MacMahon function, the generating function of plane partitions, and where $\chi_{\vir}(-) = \chi(-,\nu)$ is the Euler characteristic weighted by Behrend's microlocal function \cite{Beh}. The above identity can be seen as the Calabi--Yau specialisation (i.e.~the specialisation $s_1+s_2+s_3 = 0$) of the generating function of \emph{cohomological Donaldson--Thomas invariants} of $\BA^3$,
\[
\sum_{n\geq 0}\DT_r^{\coh}(\BA^3,n)\cdot q^n = \mathsf M((-1)^rq)^{-r\frac{(s_1+s_2)(s_1+s_3)(s_2+s_3)}{s_1s_2s_3}}\,\,\in\,\, \BQ(\!(s_1,s_2,s_3)\!) \llbracket q \rrbracket,
\]
obtained in \cite[Thm.~B]{FMR_K-DT} (as a higher rank version of \cite[Thm.~1]{MNOP2}), where $s_1$, $s_2$ and $s_3$ are the equivariant parameters of the torus $\BT = \BG_m^3$ acting on the Quot scheme.
\end{remark}

\section{The normal limit law and asymptotics}\label{sec:Asymptotics}

In \S\,\ref{sec:random_variables_on_colored_partitions}, we introduce a family of random variables on the space of $r$-colored plane partitions, and we describe the asymptotics of the members of the family after suitable normalisation in Proposition \ref{propCLT}. Theorem \ref{main1}, the main theorem of the section, is deduced from Proposition \ref{propCLT} in subsection \ref{sub:proofB}. Finally, subsection \ref{sec:proofCLT} is entirely devoted to the proof of Proposition \ref{propCLT}.    

\subsection{Random variables on $r$-colored plane partitions}\label{sec:random_variables_on_colored_partitions}

We introduce a multivariate function
\begin{equation}\label{eq:F_prod}
    F(u,v,w,z)=\prod_{l=1}^{r}\prod_{m\geq 1}\prod_{k=1}^{m}\left(\
1-wu^{k}v^{ml}z^m\right)^{-1}.
\end{equation}
The coefficient of $z^n$ is a polynomial on the three variables $u$, $v$ and $w$, which we denote by $Q_{n}(u,v,w)$, whose coefficients are nonnegative integers. When $u=v=w=1$, we obtain the well known MacMahon function raised to the power $r$, which is the generating function for  the $r$-tuples of plane partitions. Hence, $Q_{n}(1,1,1)$ is the number of $r$-tuples of plane partitions of total size $n$, i.e.~the number of $r$-colored plane partitions $(\pi_1,\, \pi_2, \, \dots, \, \pi_r)$ such that $\sum_{j=1}^{r}\lvert\pi_j\rvert=n$, where $\lvert\pi_j\rvert$ denotes the sum of the entries of the plane partition $\pi_j$ (or the number of boxes, cf.~Figure \ref{partition}). The polynomial $Q_{n}(u,v,w)$, when divided by $Q_{n}(1,1,1)$, represents the joint probability generating function of some random variables $X_n$, $Y_n$, and $Z_n$ on the space of $r$-colored plane partitions of size $n$, where each $r$-tuple is equally likely. More precisely, we have
\begin{equation}\label{eq:prob_gen}
\frac{Q_{n}(u,v,w)}{Q_{n}(1,1,1)}=\mathbb{E}\left(u^{X_n}v^{Y_n}w^{Z_n}\right).
\end{equation} 
To describe these random variables, we need to define certain parameters of plane partitions. For a plane partition $\pi$, let $\Delta(\pi)$ denote the sum of the diagonal parts of $\pi$,  $\Delta_{+}(\pi)$ denote the sum of the upper diagonal parts,  and $\Delta_{-}(\pi)$ denote the sum of the lower diagonal parts. See Figure~\ref{partition} for an example of a plane partition showing the values of these parameters.

\setlength{\abovecaptionskip}{10pt}
\begin{figure}[!ht]
\begin{tikzpicture}[scale=0.36] 
\centering
\node[anchor=west] at (-15,1) 
  {\begin{ytableau}
   1 \\
   2 & 2 \\
   2 & 2 & 1 \\
   3 & 3 & 2 \\
   5 & 4 & 3 & 1  \\
  \end{ytableau}};
\planepartition{{5,3,2,2,1},{4,3,2,2},{3,2,1},{1}}
\end{tikzpicture}
\caption{A plane partition $\pi$ of size $\lvert\pi\rvert=31$, $\Delta(\pi)=9$, $\Delta_{+}(\pi)=12$, and $\Delta_{-}(\pi)=10$.}\label{partition}
\end{figure}
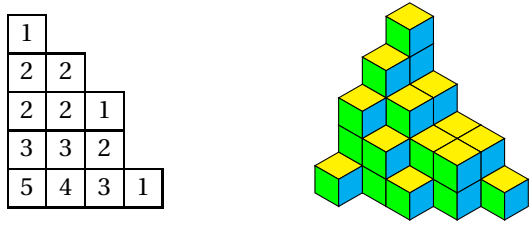

The parameter $\Delta(\pi)$ is also known as the trace of $\pi$ and it has been studied in the literature, see for instance \cite{Kamenov2007} and the references therein. In particular, one has 
\[
\prod_{m\geq 1}(1-wz^m)^{-m}=\sum_{\pi}w^{\Delta(\pi)}z^{\lvert\pi\rvert}.
\]
Similarly, we can find in \cite{Morrison_asymptotics} that 
\[
\prod_{m\geq 1}\prod_{k=1}^{m}(1-q^{2k-m}z^{m})^{-1}=\sum_{\pi}q^{\Delta(\pi)+\Delta_{+}(\pi)-\Delta_{-}(\pi)}z^{\lvert\pi\rvert}.
\]
We can easily deduce from these two identities that for an $r$-colored plane partition $\overline{\pi} = (\pi_1, \pi_2, \ldots, \pi_r)$ of total size $n$, we have
\begin{align*}
    X_n\left(\overline{\pi}\right)& =\frac{n}{2}+\frac{1}{2}\left( \sum_{l=1}^{r}\left(\Delta(\pi_l)+\Delta_{+}(\pi_l)-\Delta_{-}(\pi_l)\right)\right)\\
    &=\sum_{l=1}^{r}\left(\Delta(\pi_l)+\Delta_{+}(\pi_l)\right),\\
    Y_n\left(\overline{\pi}\right)& = \sum_{l=1}^{r}l|\pi_l|, \ \ \text{and}\\
    Z_n\left(\overline{\pi}\right)& = \sum_{l=1}^{r}\Delta(\pi_l).
\end{align*}
When $r=1$, Kamenov and Mutafchiev~\cite{Kamenov2007} proved that the distribution of the trace of a random plane partition of size $n$, when suitably normalised, is asymptotically normal. Morrison  \cite{Morrison_asymptotics} also established asymptotic normality for any random variable of the form $\delta\Delta(\pi)+\Delta_{+}(\pi)-\Delta_{-}(\pi)$, where $\pi$ is a random plane partition of size $n$ and $\delta$ is a fixed real number. We show that for any fixed integer $r\geq 1$, any nontrivial linear combination of the variables $X_n$, $Y_n$ and $Z_n$, when suitably normalised, converges weakly to a normal distribution. It is worth noting that the random variable $Y_n$ is non-constant only when $r>1$.

\bprop \label{propCLT}
For any fixed real vector $(\alpha,\beta,\gamma)\neq (0,0,0)$, there exist sequences of real numbers $\mu_n$ and $\sigma_n\geq 0$ such that the normalised random variable 
$$\frac{\alpha X_n+\beta Y_n+ \gamma Z_n-\mu_n}{\sigma_n}$$
converges weakly to the standard normal distribution. Moreover, $\mu_n$ and $\sigma_n$ satisfy the following asymptotic formulas as $n\to \infty$
\begin{align*}
\mu_n&\,\,=\,\,\left(\frac{1}{2}\alpha+\frac{r+1}{2} \beta\right)n+\frac{r^{1/3}\zeta(2)(\alpha+2\gamma)}{2^{5/3}(\zeta(3))^{2/3}}\, n^{2/3}+\mathcal{O}(n^{1/3}), \, \text{ and }\,\\
\sigma_n^2 &\,\,\sim\,\, 
\begin{cases}
\displaystyle \frac{\alpha^2+(r^2-1)\beta^2}{2^{7/3}(r\zeta(3))^{1/3}}\, n^{4/3}\, & \text{ if } (\alpha,\beta)\neq (0,0),\\[2em]
\displaystyle \hfil \frac{r^{1/3}\gamma^2}{3(2\zeta(3))^{2/3}} n^{2/3}\log n \, & \text{otherwise.} 
\end{cases}
\end{align*}
\eprop
Looking at the asymptotic behaviour of the random variables $X_n$, $Y_n$ and $Z_n$, when divided by $n^{2/3}$, we observe from the above result that the random variable $n^{-2/3}Z_n$ degenerates as $n\to \infty$. Furthermore, by the Cam\'er-Wold device \cite{Cramer-Wold}, the random variables $n^{-2/3}X_n$, $n^{-2/3}Y_n$ converge jointly to a bivariate normal distribution with a diagonal covariance matrix. The appropriate normalisation of $Z_n$ is $n^{-1/3}(\log n)^{-1/2}Z_n$ which is, when centred, asymptotically normal. This asymptotic normality and the asymptotic formulas for $\mu_n$ and $\sigma_n^2$ agree with the main result in \cite{Kamenov2007} when $r=1$ and $(\alpha,\beta,\gamma)=(0,0,1).$

\begin{convention}
We shall use the Vinogradov notation $\ll$ interchangeably with the $\mathcal O$-notation. For instance, by $f(n)\ll g(n)$ (or $g(n)\gg f(n)$)  as $n\to\infty$, we mean that there exists a positive constant $C$ such that $|f(n)|\leq Cg(n)$ for sufficiently large $n$. 
\end{convention}

Theorem \ref{main1} now follows immediately from Proposition \ref{propCLT} as we will see next.

\subsection{Deducing Theorem \ref{main1}}\label{sub:proofB} 
Granting Proposition \ref{propCLT}, we can now finish the proof of our second main result.

\begin{proofof}{Theorem \ref{main1}}
With the change of variable $T=\BL^{1/2}$, the combination of the equations \eqref{formu} and \eqref{formula_product} in Theorem~\ref{thm:main_motivic} yields 
\[
\DT_r^{\points}(\BA^3,q)
 =\prod_{l=1}^{r}\prod_{m\geq 1}\prod_{k=0}^{m-1}\left(1-T^{4+2k-m}\Big(qT^{-r-1+2l}\Big)^m\right)^{-1}.
\]
The product on the right-hand side can be expressed in terms of our auxiliary function $F(u,v,w,z)$ defined at the beginning of this section. First we write it as follows
\[
     \prod_{l=1}^{r}\prod_{m\geq 1}\prod_{k=0}^{m-1}\left(1-T^{4+m-2(m-k)}\Big(qT^{r+1-2(r-l+1)}\Big)^m\right)^{-1}.
\]
If $k$ goes from $0$ to $m-1$, then $m-k$ goes from $m$ to $1$. Similarly, if $l$ goes from $1$ to $r$, then $r-l+1$ goes from $r$ to $1$. Therefore, we have
\begin{align*}
\DT_r^{\points}(\BA^3,q)
& =\prod_{l=1}^{r}\prod_{m\geq 1}\prod_{k=1}^{m}\left(1-T^{4+m-2k}(T^{r+1-2l}q)^m\right)^{-1}\\
& = \prod_{l=1}^{r}\prod_{m\geq 1}\prod_{k=1}^{m}\left(1-T^{4-2k-2ml}(T^{r+2}q)^m\right)^{-1}\\
& =F(T^{-2},T^{-2},T^4,T^{r+2}q).
\end{align*}
This implies that 
\begin{equation}\label{eq:ET}
\BE(T^{S_{n,r}})= \frac{M_{n,r}(T)}{M_{n,r}(1)}=\BE\left(T^{4Z_n-2X_n-2Y_n+(r+2)n}\right),
\end{equation}
where $M_{n,r}(T)$ is, as defined in \eqref{eq:coefM}, the coefficient of $q^n$ in $\DT_r^{\points}(\BA^3,q)$. 
The second equality in \eqref{eq:ET} makes use of Equation~\eqref{eq:prob_gen}. Thus, $S_{n,r}$ has the same distribution as  $4Z_n-2X_n-2Y_n+(r+2)n$ --- a shifted linear combination of the variables $X_n$, $Y_n$, and $Z_n$. Now, applying Proposition \ref{propCLT} with $(\alpha, \beta, \gamma) = (-2, -2, 4)$, we deduce that the normalised random variable  $n^{-2/3}(4Z_n-2X_n-2Y_n+(r+2)n)$ converges weakly to the normal distribution $\mathcal{N}(\mu,\sigma^2)$ with 
\[
\mu = \frac{r^{1/3}\pi^2}{2^{5/3}(\zeta(3))^{2/3}}\, \text{ and }\, \sigma^2=\frac{r^{5/3}}{(2\zeta(3))^{1/3}},
\] 
which proves Theorem \ref{main1}. 
\end{proofof}

\subsection{Proof of Proposition \ref{propCLT}}\label{sec:proofCLT}
Morrison used the method of moments to prove his result in \cite{Morrison_asymptotics}. However, due to the appearance of the second variable $Y_n$ and the complication that comes with it, we decided to use a different approach. We follow the method that Hwang used in \cite{Hwang} to prove limit theorems for the number of parts in the so-called restricted partitions (these are one dimensional partitions with some restrictions on the parts).  The first part of the proof is based on the saddle-point method to get an asymptotic formula for   $Q_{n}(u,v,w)$ as $n\to\infty$, and the second is a perturbation technique to deduce the central limit theorem. 
\subsubsection{Saddle-point method}
The goal here is to obtain an asymptotic formula for $Q_{n}(u,v,w)$ as $n\to\infty$, where $(u,v,w)$ is allowed to vary in a fixed real neighborhood of $(1,1,1)$. To simplify our notation,  define
$
\Phi(t)=-\log(1-e^{-t}) 
$,
and for real numbers $a$, $b$ and $c$, we let 
\[
f(x,y)=\sum_{l=1}^{r}\sum_{m=1}^{\infty}
\sum_{k=1}^{m}\Phi(xm+y(c+ak+mbl)).
\]
The function $f$ depends on $(a,b,c)$ but we drop this dependence for now to ease notation. Also, for a positive number $\rho$, we make the  substitution $z=e^{-\tau}$, where $\tau=\rho+it$. Hence, 
\[
f(\tau,\rho)
=
-\sum_{l=1}^{r}\sum_{m=1}^{\infty}\sum_{k=1}^{m}\log\left(1-e^{-\rho(c+ak+mbl)-\tau m}\right)=\log F(e^{-a\rho},e^{-b\rho},e^{-c\rho},e^{-\tau}).
\]
One can easily verify that if $(u,v,w)$ is bounded (which is the case throughout this section), then there exists a fixed positive real number $R$ such that  the product in \eqref{eq:F_prod} converges absolutely whenever $|z|<R$. Hence, $F(u,v,w,z)$, as function of $z$, is analytic in a complex neighborhood of $0$. By Cauchy's integral formula, we have
\begin{equation}\label{eq:Cauchy}
Q_{n}(e^{-a\rho},e^{-b\rho},e^{-c\rho})=\frac{e^{n\rho}}{2\pi}\int_{-\pi}^{\pi}\exp\Big(f(\rho+it,\rho)+nt i\Big)dt.
\end{equation}
We now use the saddle-point method to estimate the above integral. We choose $\rho$ to be the positive solution of the equation
\begin{equation}\label{eq:saddle}
n=-f_{x}(\rho,\rho)=\sum_{l=1}^{r}\sum_{m=1}^{\infty}\sum_{k=1}^{m}\frac{me^{-\rho(m+(c+ak+mbl))}}{1-e^{-\rho(m+(c+ak+mbl))}},
\end{equation}
where $f_x$ denotes the partial derivative of $f$ with respect to $x$. Similar notations will be used for other partial derivatives. Note that there is a unique positive solution $\rho=\rho(n,a,b,c)$ of Equation~\eqref{eq:saddle} since the function defined by the series is strictly decreasing as a function of $\rho$, provided that $a$, $b$  and $c$ are small enough (it suffices for instance to assume that $|c|+|a|+r|b|<1$). Furthermore, we observe that $\rho\to0$ as $n\to \infty$. The following lemma reveals the asymptotic dependence between $n$ and $\rho$. 
\begin{lemma}\label{lem:1}
Let $\epsilon$ be a number in the interval  $[0,1/2]$ and $\rho$ be the solution of Equation~\eqref{eq:saddle}. Then we have 
\begin{equation}
\frac{2r\zeta(3)}{(1+\epsilon)^3} \, \rho^{-3}+\mathcal{O}(\rho^{-2})\leq n \leq \frac{2r\zeta(3)}{(1-\epsilon)^3}\,  \rho^{-3}+\mathcal{O}(\rho^{-2}),
\end{equation}
as $n\to\infty$, uniformly for $|c|+|a|+r|b|\leq \epsilon$, where the implied constants in  the $\mathcal{O}$-terms are independent of $\epsilon.$
\end{lemma}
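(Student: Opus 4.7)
The plan is to sandwich the triple sum in \eqref{eq:saddle} between two sums depending only on $m$, and then apply Mellin asymptotics to each. First, writing $s_{m,k,l}:=\rho(m+c+ak+mbl)$, the hypothesis $|c|+|a|+r|b|\leq \epsilon$ together with $1\leq k\leq m$ and $1\leq l\leq r$ gives
\[
|c+ak+mbl|\,\leq\,|c|+(|a|+r|b|)m\,\leq\,\epsilon(1+m),
\]
whence $s_{m,k,l}\in[\rho m(1-\epsilon)-\rho\epsilon,\,\rho m(1+\epsilon)+\rho\epsilon]$. Since the map $s\mapsto 1/(e^s-1)$ is strictly decreasing on $(0,\infty)$, summing over $k=1,\dots,m$ and $l=1,\dots,r$ in \eqref{eq:saddle} yields
\[
r\sum_{m\geq 1}\frac{m^2}{e^{\rho m(1+\epsilon)+\rho\epsilon}-1}\,\leq\,n\,\leq\,r\sum_{m\geq 1}\frac{m^2}{e^{\rho m(1-\epsilon)-\rho\epsilon}-1}.
\]

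Next, I would estimate the reference sum $H(\lambda,\delta):=\sum_{m\geq 1}m^2/(e^{\lambda m+\delta}-1)$ as $\lambda\to 0^+$, uniformly in small $\delta$. For $\delta=0$, the Mellin-transform identity
\[
H(\lambda,0)\,=\,\frac{1}{2\pi i}\int_{c-i\infty}^{c+i\infty}\Gamma(s)\zeta(s)\zeta(s-2)\lambda^{-s}\,ds\qquad(c>3),
\]
obtained by expanding $1/(e^{\lambda m}-1)=\sum_{j\geq 1}e^{-j\lambda m}$ and computing the Mellin transform of each term, together with a contour shift past the poles at $s=3$ (residue $2\zeta(3)\lambda^{-3}$) and $s=1$ (residue $-\lambda^{-1}/12$, using $\zeta(-1)=-1/12$), gives $H(\lambda,0)=2\zeta(3)\lambda^{-3}+\mathcal{O}(\lambda^{-1})$. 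The sensitivity to $\delta$ is controlled by the mean-value estimate
\[
|H(\lambda,\delta)-H(\lambda,0)|\,\ll\,|\delta|\sum_{m\geq 1}\frac{m^2 e^{\lambda m}}{(e^{\lambda m}-1)^2}\,\ll\,|\delta|\lambda^{-3},
\]
where the final bound follows from Mellin or the direct integral evaluation $\int_0^\infty x^2e^x/(e^x-1)^2\,dx=2\zeta(2)$. Specialising to $\lambda=\rho(1\pm\epsilon)$ and $\delta=\mp\rho\epsilon$ produces $H=2\zeta(3)/(\rho(1\pm\epsilon))^3+\mathcal{O}(\rho^{-2})$, uniformly in $\epsilon\in[0,1/2]$, because $1\pm\epsilon\in[1/2,3/2]$ stays bounded away from $0$ and $\infty$.

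The two steps combined yield the claimed sandwich, using the equivalence $n\to\infty \Longleftrightarrow \rho\to 0^+$ that follows from the monotonicity of the right-hand side of \eqref{eq:saddle} in $\rho$. The main obstacle is maintaining uniformity in $\epsilon\in[0,1/2]$ and in the admissible range of $(a,b,c)$: this requires tracking that all implied constants in the $\mathcal{O}$-terms depend only on absolute Mellin/$\zeta$-function data and not on $\epsilon$, which is the case because the factors $(1\pm\epsilon)^{\pm 3}$ are uniformly bounded, and because the small-$m$ terms—where the proportional bound $|c+ak+mbl|\leq \epsilon m$ fails and only the additive bound $\epsilon(m+1)$ is available—contribute at most $\mathcal{O}(\rho^{-1})$ in total, well within the allowed error.
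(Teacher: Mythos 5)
Your strategy coincides with the paper's: sandwich $n=-f_x(\rho,\rho)$ using monotonicity of $s\mapsto 1/(e^s-1)$, reduce to reference sums over $m$ alone, and extract asymptotics by Mellin transform (poles at $s=3$ and $s=1$ of $\zeta(s-2)\zeta(s)\Gamma(s)$). The only substantive divergence from the paper is the bound you use on the perturbation $c+ak+mbl$, and there your version has a genuine gap.

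You bound $|c+ak+mbl|\leq\epsilon(1+m)$, producing the window $s\in[\rho m(1-\epsilon)-\rho\epsilon,\ \rho m(1+\epsilon)+\rho\epsilon]$ and hence the shifted reference sum $H(\lambda,\delta)$ with $\delta=\mp\rho\epsilon$. But for the upper reference sum, at $m=1$ and $\epsilon$ near $1/2$, the argument $\rho(1-\epsilon)-\rho\epsilon=\rho(1-2\epsilon)$ tends to $0$, so that term of $H(\rho(1-\epsilon),-\rho\epsilon)$ blows up, and your claimed sandwich is vacuous exactly at the allowed endpoint $\epsilon=1/2$. The mean-value estimate $|H(\lambda,\delta)-H(\lambda,0)|\ll|\delta|\sum_m m^2e^{\lambda m}/(e^{\lambda m}-1)^2$ has the same defect: the mean value theorem requires a supremum over $\xi\in[\lambda+\delta,\lambda]$ of $e^\xi/(e^\xi-1)^2\sim\xi^{-2}$, which is attained near $\lambda+\delta$ and is \emph{not} controlled by the value at $\xi=\lambda$; near $\epsilon=1/2$, $m=1$, this supremum is of order $\rho^{-2}(1-2\epsilon)^{-2}$ and is unbounded. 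Your closing remark that ``the proportional bound $|c+ak+mbl|\leq\epsilon m$ fails for small $m$'' and that the offending terms contribute $\mathcal{O}(\rho^{-1})$ is incorrect on both counts: the proportional bound in fact \emph{always} holds, since $m\geq 1$ gives $|c|\leq|c|m$ and hence $|c|+|a|k+m|b|l\leq(|c|+|a|+r|b|)m\leq\epsilon m$; and the alleged $\mathcal{O}(\rho^{-1})$ repair is unjustified as stated.

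The fix is to replace $\epsilon(1+m)$ by $\epsilon m$, which is what the paper does. Then $s\in[\rho m(1-\epsilon),\rho m(1+\epsilon)]$, there is no additive shift $\delta$, the reference sums are simply $rH(\rho(1\pm\epsilon),0)$, and the Mellin expansion $H(\lambda,0)=2\zeta(3)\lambda^{-3}-\tfrac{1}{12}\lambda^{-1}+\mathcal{O}(1)$ (with an absolutely bounded error from the vertical contour, since $1\pm\epsilon$ stays in $[1/2,3/2]$) delivers the claimed inequality with uniform $\mathcal{O}(\rho^{-2})$ constants directly. With that one change your proof becomes essentially the paper's.
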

\begin{proof}
Recall from \eqref{eq:saddle} that
\[
n =-f_x(\rho,\rho)=-\sum_{l=1}^{r}\sum_{m=1}^{\infty}\sum_{k=1}^{m}
m\Phi'\left(\rho m\Big(1+\frac{c+ak+mbl}{m}\Big)\right).
\]
Under the  assumption that $|c|+|a|+r|b|\leq \epsilon$, for any $m\geq 1$, $1\leq k\leq m$ and $1\leq l\leq r$, we have 
\[
|c+ak+mbl|\leq \left(|c|+|a|+r|b|\right) m\leq \epsilon m.
\]
Moreover, the function $\Phi'(x)$ is an increasing function. Therefore,  
\[
\Phi'((1-\epsilon)\rho m)\leq \Phi'\left(\rho m\left(1+\frac{c+ak+mbl}{m}\right)\right)\leq \Phi'( (1+\epsilon)\rho m).
\] 
Multiplying by $m$ and summing over  $m\geq 1$, $1\leq k\leq m$ and $1\leq l\leq r$, we obtain 
\[
f_x((1-\epsilon)\rho,0)\leq f_x(\rho,\rho)\leq f_x((1+\epsilon)\rho,0).
\]
We can obtain asymptotic estimates of the lower and upper bounds as $\rho\to0^+$. This can be done via Mellin transform. The reader can consult \cite{Mellin} for a comprehensive survey on the Mellin transform method. The Mellin transform of $f_x((1-\epsilon)t,0)$ is  
\[
\int_{0}^{\infty} f_x((1-\epsilon)t,0) t^{s-1} dt = -r(1-\epsilon)^{-s}\zeta(s-2)\zeta(s)\Gamma(s),
\]
which has simple poles at $s=3$ and $s=1$. The other singularities are precisely at the negative odd integers. Thus, we have
\begin{equation}\label{eq:mellin}
    f_x((1-\epsilon)\rho,0)=-\frac{2r\zeta(3)}{(1-\epsilon)^{3}}\, \rho^{-3}+\frac{r}{12(1-\epsilon)}\, \rho^{-1}-\frac{r}{2\pi i }\int_{-i\infty}^{i\infty}\zeta(s-2)\zeta(s)\Gamma(s)((1-\epsilon)\rho)^{-s}ds.
\end{equation}
Since $|\zeta(it-2)\zeta(it)\Gamma(it)|$ decays exponentially fast as $t\to \pm\infty$, the absolute value of the  integral on the right hand side is bounded by an absolute constant. The same argument works for the estimate of the upper bound $f_x((1+\epsilon)\rho,0)$. This completes the proof of the lemma.
\end{proof}
Next, we split the integral on the right-hand side of \eqref{eq:Cauchy} into two parts as follows: let 
\[
\mathcal{I}_1=\frac{e^{n\rho}}{2\pi}\int_{-\rho^{C}}^{\rho^{C}}\exp\Big(f(\rho+it,\rho)+nti\Big)\,dt,
\] 
where $C$ is an absolute constant in the interval $(5/3, 2)$, and let $\mathcal{I}_2=Q_{n}(e^{-a\rho},e^{-b\rho},e^{-c\rho})-\mathcal{I}_1$.

\subsection*{Estimate of $\mathcal{I}_1$} 
For the rest of this section, we work under the condition of Lemma \ref{lem:1}, that is $|c|+|a|+r|b|\leq \epsilon$ and $\epsilon\in [0,1/2]$. For $-\rho^{C}\leq t\leq \rho^{C}$, Equation~\eqref{eq:saddle} and a Taylor approximation of $f(\rho+it,\rho)$ give  
\[
f(\rho+it,\rho)+nit=f(\rho,\rho)-f_{xx}(\rho,\rho)\frac{t^2}{2}+\mathcal{O}\left(\rho^{3C}\max_{-\rho^{\rho}\leq \theta\leq \rho^{C} }\Big|f_{xxx}(\rho+i\theta,\rho)\Big|\right),
\]
where the implied constant in the error term is absolute. To estimate the error term, observe that 
\[
f_{xxx}(\tau,\rho)=-\sum_{l=1}^{r}\sum_{m=1}^{\infty}\sum_{k=1}^{m}\frac{m^3e^{-\tau m-\rho(c+ak+mbl)}(1+e^{-\tau m-\rho(c+ak+mbl)})}{(1-e^{-\tau m-\rho(c+ak+mbl)})^3}
\]
For any real number $\theta$ and $\tau=\rho+i\theta$ we have
\begin{align*}
|1+e^{-\tau m-\rho(c+ak+mbl)}|& \leq 1+e^{-\rho m-\rho(c+ak+mbl)}, \\
|1-e^{-\tau m-\rho(c+ak+mbl)}| & \geq 1-e^{-\rho m-\rho(c+ak+mbl)}.
\end{align*}
Hence 
$
|f_{xxx}(\rho+i\theta,\rho)|
$
is bounded above by $|f_{xxx}(\rho,\rho)|$. We can estimate $|f_{xxx}(\rho,\rho)|$ as we did for $f_{x}(\rho,\rho)$ in the proof of Lemma \ref{lem:1}. We obtain
$$
|f_{xxx}(\rho+i\theta,\rho)|\leq |f_{xxx}(\rho,\rho)| =\mathcal{O}(\rho^{-5}),
$$  
where the implied constant depends only on $r$. Therefore, for $\lvert t \rvert\leq \rho^{C}$ we have
\[
f(\rho+it,\rho)+nit=f(\rho,\rho)-f_{xx}(\rho,\rho)\frac{t^2}{2}+\mathcal{O}(\rho^{3C-5}).
\]
Since we chose $C>5/3$, we have $\rho^{3C-5}=o(1)$. Thus  
\[
\mathcal{I}_1=\frac{e^{f(\rho,\rho)+n\rho}}{2\pi}\int_{-\rho^C}^{\rho^C}e^{-f_{xx}(\rho,\rho)t^2/2}\, dt\, \Big(1+\mathcal{O}(\rho^{3C-5})\Big).
\]
It remains to estimate the integral on the right hand side as $\rho\to 0^+$. Note that  $f_{xx}(\rho,\rho)>0$ and $f_{xx}(\rho,\rho) \gg \rho^{-4} $ (again via Mellin transform as in Lemma~\ref{lem:1}), so we have 
\begin{align*}
\int_{-\rho^{C}}^{\rho^{C}}e^{-f_{xx}(\rho,\rho)t^2/2}\, dt
& = \int_{-\infty}^{\infty}e^{-f_{xx}(\rho,\rho)t^2/2}\, dt-2\int_{\rho^{C}}^{\infty}e^{-f_{xx}(\rho,\rho)t^2/2}\, dt\\
& =\sqrt{\frac{2\pi}{f_{xx}(\rho,\rho)}}+\mathcal{O}\left(\int_{\rho^{C}}^{\infty}e^{-\rho^{C}f_{xx}(\rho,\rho)t/2}\, dt\right)\\
& =\sqrt{\frac{2\pi}{f_{xx}(\rho,\rho)}}+\mathcal{O}\left(\rho^{4-c}e^{-A\rho^{2C-4}}\right),
\end{align*}
where $A>0$ and the hidden constants in the error terms above depend only on $r$. Thus, since we chose $C<2$, the term $\rho^{4-c}e^{-A\rho^{2C-4}}$ tends to zero faster than any power of $\rho$ as $\rho\to0^{+}$. Hence, we obtain an estimate for $\mathcal{I}_1$
\begin{equation}\label{eq:est_I1}
\mathcal{I}_1=\frac{e^{f(\rho,\rho)+n\rho}}{\sqrt{2\pi f_{xx}(\rho,\rho)}}\Big(1+\mathcal{O}(\rho^{3C-5})\Big)\ \ \text{as}\ \ \rho\to 0^{+}.
\end{equation}
This estimate holds uniformly for $|c|+|a|+r|b|\leq \epsilon$ and $\epsilon\in [0,1/2]$.
\subsection*{Estimate of $\mathcal{I}_2$}
We will prove that  $|\mathcal I_2|$ is much smaller than $|\mathcal I_1|$. To this end, we assume that $\rho^{C}< t \leq \pi$, where $C$ is as before.  We have 
\begin{align*}
 \mathrm{Re} (f(\rho+it,\rho))-f(\rho,\rho)
& \,\,=\,\,-\sum_{l=1}^r\sum_{m=1}^{\infty}\sum_{k=1}^{m}
\sum_{j=1}^{\infty}j^{-1}e^{-j\rho(m+(c+ak+mbl))}(1-\cos(mjt))\\
& \,\,\leq\,\, -\sum_{l=1}^r\sum_{m=1}^{\infty}\sum_{k=1}^{m}
e^{-\rho(m+(c+ak+mbl))}\left(1-\cos(mt)\right)\\
& \,\,\leq\,\, -r\sum_{m=1}^{\infty}me^{-\rho(1+\epsilon)m}(1-\cos(mt)). 
\end{align*}
Moreover, 
\[
\sum_{m=1}^{\infty}me^{-\rho(1+\epsilon)m}(1-\cos(mt))
 =\frac{e^{\rho(1+\epsilon)}}{(e^{\rho(1+\epsilon)}-1)^2}-\mathrm{Re}\left( \frac{e^{\rho(1+\epsilon)+it}}{(e^{\rho(1+\epsilon)+it}-1)^2}\right).
\]
A lower estimate of the same term can be found in the proof of \cite[Lemma~5]{root}. By the same argument as the one given in loc.~cit., but with $\lvert t \rvert\geq \rho^{C}$, we get  
\[
\sum_{m=1}^{\infty}me^{-\rho(1+\epsilon)m}(1-\cos(mt)) \gg  \left(\rho(1+\epsilon)\right)^{2C-4} \ \ \text{as}\ \ \rho\to0^+,
\]
where the implied constant is independent of $\epsilon.$ 

Noting that $2C-4<0$, we deduce that $\exp\left( \mathrm{Re} (f(\rho+it,\rho))-f(\rho,\rho)\right)$ tends to zero faster than any power of $\rho$ as $\rho\to0^+$. Thus, by \eqref{eq:est_I1}, we find
\[
\frac{|\mathcal{I}_2|}{|\mathcal{I}_1|}\ll \sqrt{f_{xx}(\rho,\rho)} \int_{\rho^C}^{\pi}\exp\left( \mathrm{Re} (f(\rho+it,\rho))-f(\rho,\rho)\right)dt,
\]
which tends to zero faster than any power of $\rho$ as $\rho\to0^+$. Recalling that $Q_{n}(e^{-a\rho},e^{-b\rho},e^{-c\rho})=\mathcal{I}_1+\mathcal{I}_2$, we finally obtain
\begin{equation}\label{eq:asym_Q_n}
    Q_{n}(e^{-a\rho},e^{-b\rho},e^{-c\rho})=\frac{e^{f(\rho,\rho)+n\rho}}{\sqrt{2\pi f_{xx}(\rho,\rho)}}\Big(1+\mathcal{O}(\rho^{3C-5})\Big)\ \ \text{as}\ \ n\to \infty,
\end{equation}
uniformly for $|c|+|a|+r|b|\leq \epsilon$ and $\epsilon\in [0,1/2]$, where $\rho$ is the solution of Equation~\eqref{eq:saddle}.
\subsubsection{Perturbation}
Here we set $u=e^{\eta\alpha}$, $v=e^{\eta\beta}$, and $w=e^{\eta\gamma}$ where $(\alpha,\beta,\gamma)$ is fixed and $\eta$ can vary in a small open interval containing zero. Hence, Equation~\eqref{eq:prob_gen} becomes
\begin{equation}\label{eq:mgf}
    \frac{Q_n(u,v,w)}{Q_n(1,1,1)}=\mathbb{E}\left(e^{\eta(\alpha X_n+\beta Y_n+\gamma Z_n)}\right).
\end{equation}
The right-hand side is the moment generating function of the random variable $\alpha X_n+\beta Y_n+\gamma Z_n$. From now on, let $\rho_0$ be the unique positive number such that $n=-f_x(\rho_0,0).$ Then, by Lemma~\ref{lem:1} (with $\epsilon= 0$) we have 
$
n\sim 2r\zeta(3)\rho_0^{-3}.
$ Moreover, if we write $u=e^{-a \rho}$, $v=e^{-b \rho}$, and $w=e^{-\gamma\rho}$ for $\rho>0$ as before, then we have $a=-\alpha\eta \rho^{-1}$, $b=-\beta\eta \rho^{-1}$, and $c=-\gamma\eta \rho^{-1}$. This implies that 
\[
|c|+|a|+r|b|=(|\gamma|+|\alpha|+r|\beta|)\eta\rho^{-1}.
\]
Now, if we choose $\eta$ and $\rho$ in such a way that $\eta\rho^{-1}=o(1)$ and $\rho\to 0$ as $n\to \infty$, then  by Lemma~\ref{lem:1} (with $\epsilon\to 0$), we get
\[
-f_x(\rho,\rho)\sim 2r\zeta(3)\rho^{-3}.
\]

Observe that it is possible to choose such $\eta$ and $\rho>0$ that satisfy $\eta=o(\rho_0)$ and  $\rho=o(\rho_0)$. In this case, the above asymptotic formula  implies that $-f_x(\rho,\rho)>-f_x(\rho_0,0)=n$ for large enough $n$. Similarly, we can also choose $\eta$ and $\rho>0$ such that $\eta=o(\rho_0)$, $\rho\to 0$, $\eta\rho^{-1}=o(1)$, and $\frac{\rho}{\rho_0}\to \infty$. This time, the asymptotic formula gives $-f_x(\rho,\rho)<n$. Hence, for $\eta=o(\rho_0)$ and $n$ large enough, the equation $n=-f_x(\rho,\rho)$ has a unique solution, which we denote by $\rho(\eta)$. Furthermore, it satisfies $\rho(\eta)\to 0$ and $\eta\rho(\eta)^{-1}=o(1)$ as $n\to \infty$. Therefore, we also have 
\[
n\sim  2r\zeta(3)\rho(\eta)^{-3}.
\]
The latter and the asymptotic estimate $n\sim 2r\zeta(3)\rho_0^{-3}$ yield $\rho(\eta)\sim \rho_0$ as $n\to \infty$ whenever $\eta=o(\rho_0)$.

From this point onward, we assume that $\eta=o(\rho_0).$ Since $\rho(\eta)$ and $\rho_0$ are asymptotically equivalent as $n\to \infty$, so are $f_{xx}(\rho(\eta), \rho(\eta))$ and $f_{xx}(\rho_0, 0).$ Thus, by \eqref{eq:asym_Q_n}, we have 
\begin{equation}\label{eq:mgf_asymp}
    \frac{Q_n(u,v,w)}{Q_n(1,1,1)}\sim \exp\Big(f(\rho(\eta),\rho(\eta))-f(\rho_0,0)+n(\rho(\eta)-\rho_0)\Big).
\end{equation}
We want to obtain a precise asymptotic estimate of the exponent of the right-hand side that holds uniformly for $\eta=o(\rho_0)$. We will use a Taylor approximation of the function $f(\rho(\eta),\rho(\eta))$ when $\eta$ is near $0$ (noting that the parameters $a$, $b$, and $c$ are themselves functions of $\eta$). So to highlight the variable $\eta$, we define
\begin{align*}
    g(x,y)
    & =\log F(e^{y\alpha},e^{y\beta},e^{y\gamma},e^{-x})\\
    &=-\sum_{l=1}^{r}\sum_{m=1}^{\infty}\sum_{k=1}^{m}\log\left(1-e^{y(\gamma+\alpha k+m\beta l)-x m}\right)\\
    &=\sum_{l=1}^{r}\sum_{m=1}^{\infty}\sum_{k=1}^{m}\Phi(xm-y(\gamma+\alpha k+m\beta l)).
\end{align*}
This is essentially the same as the function $f(x,y)$ (if $(a,b,c)$ is replaced by $(-\alpha, -\beta, -\gamma)$). However, in our case we have $a=-\alpha\eta \rho(\eta)^{-1}$, $b=-\beta\eta \rho(\eta)^{-1}$, and $c=-\gamma\eta \rho(\eta)^{-1}$. Hence, the meanings of the first and second variables will be different. For instance, we have the equation
\begin{equation}\label{eq:f2g}
    g(\rho(\eta),\eta)=f(\rho(\eta),\rho(\eta))\ \ \text{and}\ \ g(\rho_0,0)=f(\rho_0,0). 
\end{equation}
Similarly, the saddle-point equation $n=-f_x(\rho(\eta),\rho(\eta))$ becomes $n=-g_x(\rho(\eta),\eta).$
First, we apply implicit differentiation (with respect to $\eta$) to the latter equation, then by the mean value theorem, we get  
\[
\rho(\eta)-\rho_0= -\eta \, \frac{g_{xy}(\rho(\theta),\theta)}{g_{xx}(\rho(\theta),\theta)},
\]
for some real number $\theta$ between $0$ and $\eta.$ We can  estimate $g_{xy}(\rho(\theta),\theta)$ and $g_{xx}(\rho(\theta),\theta)$ via Mellin transform in the same way as in the proof of Lemma~\ref{lem:1}, but we need the Mellin transforms of the functions $g_{xx}(t,0)$ and $g_{xy}(t,0)$. The Mellin transform of $g_{xx}(t,0)$ is $r\zeta(s-3)\zeta(s-1)\Gamma(s)$. To determine the Mellin transform of $g_{x,y}(t,0)$, first we write
\begin{align*}
    g_{x,y}(t,0)
    & \,\,=\,\,-\sum_{l=1}^{r}\sum_{m=1}^{\infty}\sum_{k=1}^{m}m(\gamma+\alpha k+m\beta l)\Phi''(tm)\\
    & \,\,=\,\, -\sum_{m=1}^{\infty}m\left(rm\gamma+r\alpha\frac{m^2+m}{2}+m^2\beta\frac{r^2+r}{2}\right)\Phi''(tm)\\
    & \,\,=\,\, -\frac{r}{2}\sum_{m=1}^{\infty}\left((\alpha+(r+1)\beta) m^3+(\alpha+2\gamma)m^2\right)\Phi''(tm).
\end{align*}
Since the Mellin transform of $\Phi''(t)$ is $\zeta(s-1)\Gamma(s)$, the Mellin transform of $g_{x,y}(t,0)$ is 
\[
-\frac{r}{2}\left((\alpha+(r+1)\beta)\zeta(s-3) +(\alpha+2\gamma)\zeta(s-2)\right)\zeta(s-1)\Gamma(s).
\]
We deduce the following asymptotic formulas as $\rho\to0^{+}$:
\begin{align}
    g_{x,x}(\rho,0) & =6r\zeta(3)\rho^{-4}+\mathcal{O}(\rho^{-2})\label{eq:asy_gxx},\\
    g_{x,y}(\rho,0) 
    & =-3r\zeta(3)(\alpha+(r+1)\beta)\rho^{-4}-r\zeta(2)(\alpha+2\gamma)\rho^{-3}+\mathcal{O}(\rho^{-2})\label{eq:asy_gxy}.
\end{align}
The fact that $\rho(\theta)\sim \rho_0$  (since $|\theta|\leq |\eta|=o(\rho_0)$) and the argument in the proof of Lemma~\ref{lem:1} (with $\epsilon\to 0$) imply 
\begin{equation}\label{eq:dif_rho}
    \rho(\eta)-\rho_0\sim -\eta \, \frac{g_{xy}(\rho_0,0)}{g_{xx}(\rho_0,0)}=\mathcal{O}(|\eta|)=o(\rho_0). 
\end{equation} 
Similarly, we have the following estimate for the third partial derivatives
\[
\frac{\partial^k}{\partial x^k}\frac{\partial^l}{\partial y^l} g (x,y)\Big|_{(x,y)=(\rho(\theta),\theta)}=\mathcal{O}\Big(\rho_0^{-5}\Big),
\] 
uniformly for $\theta=o(\rho_0)$, and for any nonnegative integers $k$ and $l$ such that $k+l=3$. This shows that if $\eta=o(\rho_0)$, then we have the Taylor approximation
\begin{multline*}
g(\rho(\eta),\eta)  =  g(\rho_0,0)+g_{y}(\rho_0,0)\eta+g_x(\rho_0,0)(\rho(\eta)-\rho_0)\\
+ \frac{1}{2}\Big(g_{yy}(\rho_0,0) \, \eta^2+2g_{x y}(\rho_0,0) \,  \eta(\rho(\eta)-\rho_0)+g_{xx}(\rho_0,0) \, (\rho(\eta)-\rho_0)^2\Big)+\mathcal{O}(|\eta|^3\rho_0^{-5}),
\end{multline*}
where the implied constant in the error term is independent of $n$.
Using \eqref{eq:dif_rho} and saddle-point equation  $n=-g_x(\rho_0,0)$, we deduce that
\begin{multline*}
    g(\rho(\eta),\eta)-g(\rho_0,0)+n(\rho(\eta)-\rho_0)
    =\\
    g_y(\rho_0,0)\eta + \left(\frac{g_{yy}(\rho_0,0)g_{xx}(\rho_0,0)-(g_{xy}(\rho_0,0))^2}{g_{xx}(\rho_0,0)}\right)\frac{\eta^2}{2}+\mathcal{O}(|\eta|^3\rho_0^{-5}).
\end{multline*}
Let us define 
\begin{equation}\label{eq:def_mu_sigma}
    \sigma_n^2=\frac{g_{yy}(\rho_0,0)g_{xx}(\rho_0,0)-(g_{xy}(\rho_0,0))^2}{g_{xx}(\rho_0,0)} \, \text{ and }\,  \mu_n=g_y(\rho_0,0),
\end{equation}
and we choose $\eta=\frac{t}{\sigma_n}$ where $t$ is a fixed real number. Then our estimate \eqref{eq:mgf_asymp} becomes 
\begin{equation}\label{eq:quotient_Q}
    \frac{Q_n(u,v,w)}{Q_n(1,1,1)}\sim \exp\left(\frac{\mu_n}{\sigma_n}\ t+\frac{t^2}{2}+\mathcal{O}(\sigma_n^{-3}\rho_0^{-5})\right),
\end{equation}
as $n\to\infty$ (this  is valid as long as $\eta=\frac{t}{\sigma_n}=o(\rho_0)$, but we do not even know at this stage whether $\sigma_n^2$ is positive). Hence, let us    estimate $\sigma_n^2$. 

Assume first that $(\alpha,\beta)\neq (0,0)$. Once again, by the Mellin transform technique, we have
\begin{equation}\label{eq:asy_gyy}
    g_{yy}(\rho_0,0)  \sim r\left(2\alpha^2+3(r+1)\alpha\beta+(r+1)(2r+1)\beta^2\right)\zeta(3)\rho_0^{-4}.
\end{equation}
Putting the estimates \eqref{eq:asy_gxx}, \eqref{eq:asy_gxy} and \eqref{eq:asy_gyy} into the formula for $\sigma_n^2$ in \eqref{eq:def_mu_sigma}, we have 
\begin{equation}\label{eq:var_rho}
    \sigma_n^2\sim \left(\frac{r\zeta(3)}{2}\, \alpha^2+\frac{r(r^2-1)\zeta(3)}{2}\, \beta^2\right)\rho_0^{-4}.
\end{equation}
Hence, $\eta=\mathcal{O}(\rho_0^2)$ and the estimate \eqref{eq:quotient_Q} becomes 
\[
\frac{Q_n(u,v,w)}{Q_n(1,1,1)}\sim \exp\left(\frac{\mu_n}{\sigma_n}\ t+\frac{t^2}{2}+\mathcal{O}(\rho_0)\right).
\]
Therefore, if $(\alpha,\beta)\neq (0,0)$ and $t$ is any fixed real number, then the latter identity together with \eqref{eq:mgf} yield
\[
e^{-\mu_nt\sigma_n^{-1}}\mathbb{E}\left(e^{t\, \sigma_n^{-1}(\alpha X_n+\beta Y_n+\gamma Z_n)}\right)\sim e^{t^2/2},
\]
as $n\to \infty$. This means, by Curtiss' theorem~\cite{Curtiss}, that 
\[
\frac{\alpha X_n+\beta Y_n+\gamma Z_n-\mu_n}{\sigma_n}\overset{\mathrm{d}}{\to} \mathcal{N}(0,1) \ \ \text{as}\ \ n\to\infty.
\]
To obtain the asymptotic formulas for $\sigma_n^2$ and $\mu_n$, recall from the saddle-point equation and \eqref{eq:mellin} (with $\epsilon=0$) that $n= 2r\zeta(3)\rho_0^{-3}-\frac{1}{12}r\rho_0^{-1}+\mathcal{O}(1)$. Inverting this yields
\begin{equation}\label{eq:rhoton}
    \rho_0^{-1}=\frac{n^{1/3}}{(2r\zeta(3))^{1/3}}+\mathcal{O}(n^{-1/3}).
\end{equation}
So we first estimate $\sigma_n^2$ and $\mu_n$ in terms of $\rho_0$, then use the above to get the asymptotic formulas in terms of $n$.  Such an estimate for $\sigma_n^2$ is already given in \eqref{eq:var_rho}. The estimate of $\mu_n$ can be obtained easily from the Mellin transform of $g_y(t,0)$, which is
\[
\frac{r}{2}\left(\left(\alpha+(r+1)\beta\right)\zeta(s-2)+\left(\alpha+2\gamma\right)\zeta(s-1)\right)\zeta(s)\Gamma(s).
\]
By a straightforward calculation, we have 
\[
\sigma_n^2\sim \frac{\alpha^2+(r^2-1)\beta^2}{2^{7/3}(r\zeta(3))^{1/3}}\, n^{4/3}\, \text{ and }\, \mu_n= \left(\frac{1}{2}\alpha+\frac{r+1}{2} \beta\right)n+\frac{r^{1/3}\zeta(2)(\alpha+2\gamma)}{2^{5/3}(\zeta(3))^{2/3}}\, n^{2/3}+\mathcal{O}(n^{1/3}).
\]

If we now assume that $(\alpha,\beta)=(0,0)$ but $\gamma\neq 0$, then the Mellin transform of $g_{yy}(t,0)$ is $r\gamma^2\zeta(s-1)^2\Gamma(s)$ whose dominant singularity is a double pole at $s=2$. This leads to the asymptotic formula
$
g_{yy}(\rho_0,0)= r\gamma^2\rho_0^{-2}\log\left(\rho_0^{-1}\right)+\mathcal{O}(\rho_0^{-2}).
$
Hence, the formula in \eqref{eq:def_mu_sigma} gives
\begin{equation}\label{eq:var_gamma}
\sigma_n^2=r\gamma^2\rho_0^{-2}\log\left(\rho_0^{-1}\right)+\mathcal{O}(\rho_0^{-2}).    
\end{equation}
Thus, for a fixed real number $t$, we have 
\[
\eta=\frac{t}{\sigma_n}=\mathcal{O}(\rho_0|\log \rho_0|^{-1/2}).
\]
So we still have our desired condition that $\eta=o(\rho_0)$. Moreover, applying \eqref{eq:asy_gxx} and \eqref{eq:asy_gxy} with $(\alpha,\beta)=(0,0)$, the estimate \eqref{eq:dif_rho} becomes 
\[
\rho(\eta)-\rho_0\sim -\eta \, \frac{g_{xy}(\rho_0,0)}{g_{xx}(\rho_0,0)}=\mathcal{O}(|\eta| \rho_0)=\mathcal{O}(\rho_0^2|\log \rho_0|^{-1/2}).
\]
On the other hand, for any $\theta=o(\rho_0)$, we have the following: 
\begin{align*}
       g_{xxx}(\rho(\theta),\theta)&\,\,=\,\,\mathcal{O}(\rho_0^{-5}), &   g_{xxy}(\rho(\theta),\theta)\,\,=\,\,\mathcal{O}(\rho_0^{-4}),\\
       g_{xyy}(\rho(\theta),\theta)&\,\,=\,\,\mathcal{O}(\rho_0^{-3}|\log \rho_0|), &  g_{yyy}(\rho(\theta),\theta)\,\,=\,\,\mathcal{O}(\rho_0^{-3}).
\end{align*}
 Therefore, \eqref{eq:quotient_Q} becomes
 \[
 \frac{Q_n(u,v,w)}{Q_n(1,1,1)}\sim \exp\left(\frac{\mu_n}{\sigma_n}\ t+\frac{t^2}{2}+\mathcal{O}\left((\log n)^{-3/2}\right)\right).
 \]
Just as in the previous case, this is enough to prove the central limit theorem. The asymptotic formula for the variance in terms of $n$ can be obtained from \eqref{eq:var_gamma} using \eqref{eq:rhoton}. The proof of Proposition~\ref{propCLT} is complete.  

\bibliographystyle{amsplain-nodash}
\bibliography{bib}
\end{document}